\newtheorem{thm}{Theorem}[section]
\newtheorem{lemma}[thm]{Lemma}
\newtheorem{prop}[thm]{Proposition}
\newtheorem{cor}[thm]{Corollary}
\theoremstyle{definition}
\newtheorem{defi}[thm]{Definition}
\newtheorem{example}[thm]{Example}
\theoremstyle{remark}
\newtheorem{remark}[thm]{Remark}
\numberwithin{equation}{section}
\newcommand{\<}{\langle}
\renewcommand{\>}{\rangle}
\newcommand{\st}{\; | \;}             %%  such that
\newcommand{\SSS}{\mathbb{S}}
\newcommand{\D}{\mathcal{D}}      % derived category
\newcommand{\Gammahat}{{\widehat{\Gamma}}}
\newcommand{\PQ}{P(\overrightarrow{Q})}
\newcommand{\Om}{\Omega}
\newcommand{\ep}{\epsilon}
\newcommand{\Z}{\mathbb{Z}}       % integers
\newcommand{\K}{\mathcal{K}}
\newcommand{\N}{\mathbb{N}}
\newcommand{\gl}{\mathfrak{gl}}
\DeclareMathOperator{\Rep}{Rep}
\DeclareMathOperator{\Deg}{deg}
\DeclareMathOperator{\SRep}{SRep}
\DeclareMathOperator{\SVect}{\mathcal{SV}ect}
\begin{document}
\title{Quiver Representations in the Super-Category and Gabriel's Theorem for $A(n,m)$}
\author{J. Thind}
 \address{Department of Mathematics, University of Toronto, 
            Toronto, ON, Canada}
    \email{jthind@math.utoronto.ca}
\maketitle
%%%%%%%%%%%%%%%%%%%%%%%%%%%%%%%%%%%%%%%%%%%%%%%%%%%%%%%%%%
\begin{abstract}
Gabriel's Theorem, and the work of Bernstein, Gelfand and Ponomarev established a connection between the theory of quiver representations and the theory of simple Lie algebras. Lie superalgebras have been studied from many perspectives, and many results about Lie algebras have analogues for Lie superalgebras. In this paper, the notion of a super-representation of a quiver is introduced, as well as the notion of reflection functors for odd roots. These ideas are then used to give a categorical construction of the root system $A(n,m)$ by establishing a version of Gabriel's Theorem and modifying the Bernstein, Gelfand, Ponomarev construction. This is then used to give a combinatorial description of the root system $A(n,m)$ where roots correspond to vertices of a canonically defined quiver $\Gammahat$. We also introduce a graded path algebra, whose category of graded modules is equivalent to the category of super-representations. This is used to define a graded preprojective algebra.
\end{abstract}

%%%%%%%%%%%%%%%%%%%%%%%%%%%%%%%%%%%%%%%%%%%%%%%%%%%%%%%%%%
\section{Introduction}\label{s:intro}
Similar to the case of simple Lie algebras, the ``classical" simple Lie superalgebras have a classification by root system (see \cite{kac}). For Lie superalgebras the root system inherits a $\Z_{2}$ grading, and the odd roots exhibit different behaviour than the even roots. The notion of Dynkin diagram exists, but requires some extra care to define. Here the vertices are again simple roots, however they are coloured according to their parity. Unlike the classical case, the resulting diagram is dependent on the choice of simple roots. Whereas simple Lie algebras are classified by Dynkin diagrams, the same Lie superalgebra may have very different Dynkin diagrams, where not only does the colouring change, but the underlying graph does as well. 

However, the root system of type $A(n,m)$ is rather simple, and by ignoring grading, can be identified with a root system of type $A_{n+m-1}$, independent of the choice of simple roots. In particular, the underlying graph of the coloured Dynkin diagram is always of type $A_{n+m-1}$. Gabriel's Theorem, and the results of Bernstein, Gelfand and Ponomarev (BGP), give a categorical construction of the root system $A_{n+m-1}$ from the corresponding Dynkin graph $\Gamma$ via quiver representations.

Briefly, a quiver is an oriented graph, and a representation of a quiver is the choice of vector spaces at each vertex and linear maps corresponding to the edges. (For more details, see Section~\ref{s:quivers}.) 

Moduli spaces of quiver representations have provided geometric constructions of bases for representations of Lie algebras; the canonical bases of Lusztig, and the crystal bases of Kashiwara, via quiver varieties (see \cite{lusztig}, \cite{nak1}, \cite{nak2}, \cite{kas}, \cite{kash}). For Lie superalgebras, crystal bases have been shown to exist for $\gl (n,m)$ in \cite{bkk}. The author is currently working with I. Dimitrov on geometric constructions of crystal bases for $\gl(n,m)$. The motivation for this paper is to set up an appropriate framework for quiver representations in the super-category.

This paper introduces the category of super-representations of a quiver, as well as reflection functors for odd roots. In this category, indecomposable objects can be assigned a parity, and so we can use this to put a $\Z_{2}$ grading on the Grothendieck group of this category.

The main result (Theorem~\ref{t:main}) uses the notions of super-representations and reflection functors to extend Gabriel's Theorem and the BGP construction to the root system $A(n,m)$, and gives a combinatorial description of $A(n,m)$ via the Auslander-Reiten quiver similar to \cite{kt}. In particular, given a positive root $\alpha \in A(n,m)$, we construct an indecomposable super-representation $X_{\alpha}$, of dimension $\alpha$, with the same parity as $\alpha$. This gives a categorification, and combinatorial construction, of the root system $A(n,m)$ from any of its coloured Dynkin diagrams.

We begin with a review of the root system $A(n,m)$. After reviewing the relevant basics of quiver theory, we recall the construction of indecomposable representations by Bernstein, Gelfand and Ponomarev (see \cite{bgp}). We then introduce the category of super-representations of a quiver, and reflection functors for these categories. We then state and prove the Main Theorem. Finally, we introduce a graded version of the path algebra, show that the category of $\Z_{2}$-graded modules over this algebra is equivalent to the category of super-representations of the quiver, and define a graded preprojective algebra. The study of this preprojective algebra and its relationship to the representation theory of the corresponding Lie superalgebra is the subject of on-going research.

A slightly different notion of super-representation of a quiver appeared in \cite{leites}, however that paper contained no details, and to the best of our knowledge, no thorough treatment has appeared in the literature. 

The author would like to thank I. Dimitrov for many helpful discussions.

\begin{remark} This paper updates a previous version. Aside from correcting some errors, we have modified our definitions and constructions, and added some new material (graded path algebra). In the previous version of this paper, we also used a different definition of super-representation of a quiver. The earlier definition made certain notions easier (such as reflection functors), but was not compatible with the definition of modules over the graded path algebra. 
\end{remark}

%%%%%%%%%%%%%%%%%%%%%%%%%%%%%%%%%%%%%%%%%%%%%%%%%%%%%%%%%%%
\section{Root Systems of Type $A(n,m)$}\label{s:prelim}
In this section we briefly recall the basics of roots systems of type $A(n,m)$ following the presentation in \cite{kac}.

The root system of a Lie superalgebra has a splitting into even and odd parts coming from the $\Z_{2}$ grading on the corresponding Lie superalgebra; if we denote by $R$ the root system, then $R=R_{0} \sqcup R_{1}$, where $R_{0}, R_{1}$ are the even and odd parts respectively. This defines a parity function $p: R \to \Z_{2}$.

For type $A(n,m)$ the roots are expressed in terms of functionals $\ep_{1}, \ldots , \ep_{n}$, $\delta_{1}, \ldots , \delta_{m}$. If we write an element of $sl(n,m)$ as a block matrix $X=\left(\begin{array}{c|c} A & B \\\hline C & D \end{array}\right)$ with $A - n \times n$, $B - m\times n$, $C - n\times m$, $D - m \times m$, then $\ep_{i} (X) = A_{ii}$, and $\delta_{i} (X) = D_{ii}$. The roots and the $\Z_{2}$ grading are given by $R_{0} = \{ \ep_{i} - \ep_{j} , \delta_{i} - \delta_{j} | i\neq j \}$, $R_{1} = \{ \pm (\ep_{i} - \delta_{j} ) \}$.

As with the classical case, one can define a polarization into positive and negative parts, and define sets of simple roots. In the case of $A(n,m)$ the sets of simple roots, up to equivalence, are given by two sequences $S = \{s_{1} < s_{2} < \cdots \} , T=\{ t_{1} < t_{2} < \cdots \}$ and a choice of sign. For such choices, the corresponding set of simple roots is given by $\Pi_{S,T} = \pm \{ \ep_{1} -\ep_{2}, \ep_{2} - \ep_{3}, \ldots, \ep_{s_{1}} - \delta_{1}, \delta_{1} - \delta_{2}, \ldots, \delta_{t_{1}} - \ep_{s_{1}+1}, \ldots \}$. From $\Pi$ we construct a Dynkin diagram as in the classical case, except that we colour the vertices corresponding to the odd simple roots by $\bigotimes$. Denote the coloured Dynkin diagram by $\Gamma_{col}$. 

\begin{example}\label{e:ex1}
Consider the case of $A(2,2)$. The roots are $R_{0} = \{ \pm (\ep_{1} - \ep_{2}), \pm (\delta_{1} - \delta_{2}) \}$, $R_{1} = \{ \pm (\ep_{1} - \delta_{1}) , \pm (\ep_{1} -\delta_{2}) , \pm (\ep_{2} -\delta_{1} ), \pm (\ep_{2} - \delta_{2}) \}$, and we can take $\Pi = \{ \alpha_{1} = \ep_{1} - \ep_{2} , \alpha_{2} = \ep_{2} - \delta_{1} , \alpha_{3} = \delta_{1} - \delta_{2} \}$ as a set simple roots. This gives the following Dynkin diagram.

$$\xy
(0,0)*{\bigcirc}; (10,0)*{\bigotimes} **\dir{-};
(11.5,0)*{}; (20,0)*{\bigcirc} **\dir{-}; 
\endxy
$$

If instead, we chose the set of simple roots $\Pi^{\prime} = \{ \alpha_{1} = \ep_{1} - \delta_{1} , \alpha_{2} = \delta_{1} - \ep_{2} , \alpha_{3} = \ep_{2} - \delta_{2} \}$, we obtain the following Dynkin diagram.

$$
\xy
(0,0)*{\bigotimes}; (10,0)*{\bigotimes} **\dir{-};
(11.5,0)*{}; (20,0)*{\bigotimes} **\dir{-}; 
\endxy
$$

Note that in either case, the underlying uncoloured diagram is the same (a Dynkin diagram of type $A_{3}$).
\end{example} 

In the case of classical root systems, there is a group of automorphisms $W$ (Weyl group) acting simply transitively on the sets of simple roots by reflections. For Lie superalgebras this is not the case. In particular, the Weyl group does not act simply transitively. However, Serganova (see \cite{serg}) has developed a notion of ``Weyl groupoid" which plays the role of the Weyl group. The ``reflections" here are of two types: even reflections, which are honest reflections, and odd ``reflections" which describe how to change from a simple system containing the odd root $\alpha$ to the simple system containing $-\alpha$. In general, it is not easy to describe these odd reflections explicitly. However, for type $A(n,m)$ it is. By analogy with the classical case, we define a reflection for an odd simple root $\alpha_{i}$ by
$$s_{\alpha_{i} } (\alpha_{j}) = \begin{cases}
-\alpha_{i} &\text { if } i=j \\
\alpha_{i} + \alpha_{j}  &\text{ if } i-j  \text{ in } \Gamma \\
\alpha_{j} &\text{ otherwise.}
\end{cases}$$
This ``odd reflection" replaces the odd root $\alpha_{i}$ by its negative, and also changes the colouring of the neighbouring vertices in the coloured Dynkin diagram. (See \cite{serg} for details on odd reflections.) Note that an even reflection leaves the colouring of the Dynkin diagram fixed.

\begin{example}\label{e:ex2}
In the case of $A(2,2)$ considered in Example~\ref{e:ex1}, the simple system $\Pi^{\prime}$ was obtained from $\Pi$ by the odd reflection $s_{\alpha_{2}}$.
\end{example}

\subsection{Explicit Identification A(n,m) $\to$ A$_{ {\bf n+m-1}}$}
Express the roots of $A_{k}$ in terms of $e_{i}$ with $1\leq i \leq k+1$. Then the roots are $e_{i} - e_{j}$ with $i\neq j$. Define a map $A(n,m) \to A_{n+m-1}$ by $\epsilon_{i} \mapsto e_{i}$ and $\delta_{i} \mapsto e_{n+i}$. For a root $\alpha \in A(n,m)$ denote its image in $A_{n+m-1}$ by $\bar{\alpha}$.

%%%%%%%%%%%%%%%%%%%%%%%%%%%%%%%%%%%%%%%%%%%%%%%%%%%%%%%%%%%
\section{Quivers}\label{s:quivers}
In this section we review the basics of quiver theory (for more details see \cite{crawley-boevey}, \cite{bgp}, \cite{kt}). In particular, we recall the notion of quiver representations, Gabriel's Theorem, BGP reflection functors and the construction of indecomposable representations given in \cite{bgp}. We then review the results of \cite{kt}.

\subsection{Basics} 
A quiver $\overrightarrow{Q}$ is an oriented graph. The vertex set is denoted by $Q_{0}$ and the arrow set is denoted by $Q_{1}$. In what follows a quiver is obtained by orienting a graph $\Gamma$. In such a case the quiver is denoted by $\overrightarrow{Q} = (\Gamma, \Om)$ where $\Om$ is an orientation of the graph $\Gamma$. There are two functions $s,t : Q_{1} \to Q_{0}$, called ``source" and ``target" respectively, defined on an oriented edge $e : i\to j$ by $s(e) = i$ and $t(e) = j$. Let $n_{ij}$ denote the number of edges between $i$ and $j$ in $\Gamma$.

Fix a field $\mathbb{K}$. A representation of a quiver $\overrightarrow{Q}$ is a choice of vector space $X(i)$ for every vertex in $Q_{0}$ and linear map $x_{e} : X(i) \to X(j)$ for every edge $e: i \to j$. A morphism $\Phi : X \to Y$ of representations is a collection of linear maps $\phi_{i} : X(i) \to Y(i)$ such that the following diagram is commutative for every edge $e: i \to j$.
$$
\xymatrix{
X(i) \ar[r]^{x_{e}} \ar[d]^{\phi_{i}} & X(j) \ar[d]_{\phi_{j}} \\
Y(i) \ar[r]^{y_{e}} & Y(j) \\
}$$
Denote the Abelian category of representations of $\overrightarrow{Q}$ by $\Rep (\overrightarrow{Q})$. 

For any representation $X$, define its dimension vector by $\underline{\dim} (X) = ( \dim X(i) )_{i\in Q_{0}} \in \Z^{Q_{0}}$. 

\subsection{Gabriel's Theorem}
The connection between Lie theory and Quiver Theory began with the results of Gabriel in \cite{gab}, and the proof of Gabriel's Theorem given by Bernstein, Gelfand and Ponomarev in \cite{bgp}.

\begin{thm}\label{t:gabriel} \cite{gab}
\begin{enumerate}
\item A quiver $(\Gamma, \Om)$ has finitely many isomorphism classes of indecomposable representations if and only if the underlying graph $\Gamma$ is a Dynkin diagram of type $A,D,E$.
\item In this case, there is a bijection between isomorphism classes of indecomposable representations and positive roots of the corresponding root system, given by $ [X] \mapsto \underline{\dim} (X).$ Moreover, this bijection induces an isomorphism between $\K (\Rep (\Gamma, \Om))$ and the corresponding root lattice $\Z^{Q_{0}}$. (Here $\K$ denotes the Grothendieck group.)
\end{enumerate}
\end{thm}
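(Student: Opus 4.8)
The plan is to run the Bernstein--Gelfand--Ponomarev argument, organized around the Tits form. Put $\langle \alpha, \beta \rangle = \sum_{i \in Q_{0}} \alpha_{i}\beta_{i} - \sum_{e \in Q_{1}} \alpha_{s(e)}\beta_{t(e)}$ for $\alpha,\beta \in \Z^{Q_{0}}$, the Euler form, and $q(\alpha) = \langle \alpha,\alpha\rangle$ the associated quadratic form, which depends only on $\Gamma$. The key input is the homological interpretation: the path algebra $\mathbb{K}\overrightarrow{Q}$ is hereditary, so $\langle \underline{\dim} X, \underline{\dim} Y\rangle = \dim\Hom(X,Y) - \dim\Ext^{1}(X,Y)$ for all $X,Y \in \Rep(\overrightarrow{Q})$, whence $q(\underline{\dim} X) = \dim\End(X) - \dim\Ext^{1}(X,X)$. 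One also needs that $\underline{\dim}$ is additive on short exact sequences.

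\textbf{Dynkin case (parts (1), ``if'', and (2)).} Assume $\Gamma$ is of type $A,D,E$, so $q$ is positive definite. First I would set up the BGP reflection functors $S_{i}^{+}\colon \Rep(\overrightarrow{Q}) \to \Rep(s_{i}\overrightarrow{Q})$ at a sink $i$ and $S_{i}^{-}$ at a source, recording: they restrict to mutually inverse equivalences between the full subcategories of representations having no summand isomorphic to the simple $S_{i}$ supported at $i$; they annihilate that simple; and on every other indecomposable $X$ they act on dimension vectors by $\underline{\dim}(S_{i}^{\pm}X) = s_{i}(\underline{\dim} X)$. Fix an admissible ordering $i_{1},\dots,i_{n}$ of the vertices (so $i_{1}$ is a sink of $\overrightarrow{Q}$, $i_{2}$ a sink of $s_{i_{1}}\overrightarrow{Q}$, etc., with $s_{i_{n}}\cdots s_{i_{1}}\overrightarrow{Q} = \overrightarrow{Q}$ and $c := s_{i_{n}}\cdots s_{i_{1}}$ a Coxeter element). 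The crux is the reachability claim: applying the functors $S_{i_{1}}^{+}, S_{i_{2}}^{+}, \dots$ along the periodically repeated sequence annihilates any indecomposable $X$ after finitely many steps. Indeed, otherwise the Coxeter functor $\Phi^{+} = S_{i_{n}}^{+}\cdots S_{i_{1}}^{+}$ applies to $X$ forever with $\underline{\dim}((\Phi^{+})^{m}X) = c^{m}(\underline{\dim} X)$ a non-negative vector for all $m$; but $c$ has finite order $h$ and no nonzero fixed vector on the positive-definite lattice $\Z^{Q_{0}}$, so $\sum_{m=0}^{h-1} c^{m} = 0$, and a finite sum of non-negative vectors, not all zero, cannot vanish --- contradiction. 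Hence $X \cong S_{i_{1}}^{-}\cdots S_{i_{k-1}}^{-}(S_{i_{k}})$ for some $k$, which simultaneously shows that $\overrightarrow{Q}$ has only finitely many indecomposables and gives $\underline{\dim} X = s_{i_{1}}\cdots s_{i_{k-1}}(\alpha_{i_{k}})$, a positive root. Injectivity of $[X] \mapsto \underline{\dim} X$ on indecomposables follows since the reflection functors are equivalences: two indecomposables of the same dimension vector reduce to the same simple in lockstep, hence are isomorphic. Surjectivity onto positive roots follows from the standard combinatorial fact that, along the periodic sink sequence, every positive root occurs exactly once as some $s_{i_{1}}\cdots s_{i_{k-1}}(\alpha_{i_{k}})$, which is therefore realized by the nonzero indecomposable $S_{i_{1}}^{-}\cdots S_{i_{k-1}}^{-}(S_{i_{k}})$.

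\textbf{Non-Dynkin case (part (1), ``only if'').} If $\Gamma$ is connected and not of type $A,D,E$, then $\Gamma$ contains or equals an extended Dynkin diagram $\widetilde{A},\widetilde{D},\widetilde{E}$. Since representations of a full subquiver extend by zero to $\overrightarrow{Q}$ without affecting indecomposability, I may assume $\Gamma$ itself is extended Dynkin; then $q$ is positive semidefinite with radical $\Z\delta$, $\delta$ the minimal positive imaginary root. It now suffices to exhibit infinitely many pairwise non-isomorphic indecomposables, which is done by the classical one-parameter family of dimension vector $\delta$ (for the Kronecker quiver $\widetilde{A}_{1}$, the representations $\mathbb{K} \rightrightarrows \mathbb{K}$ with map pair $(1,\lambda)$, $\lambda \in \mathbb{K}$, together with the one at $\infty$; over a finite $\mathbb{K}$ one instead uses indecomposables of dimension $m\delta$ for all $m \ge 1$), with the general extended Dynkin case handled by the analogous ``regular'' construction.

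\textbf{Grothendieck group.} Additivity of $\underline{\dim}$ induces a homomorphism $\K(\Rep(\Gamma,\Om)) \to \Z^{Q_{0}}$. Since $\Rep(\Gamma,\Om)$ is the finite-dimensional module category of a finite-dimensional algebra, $\K(\Rep(\Gamma,\Om))$ is free abelian on the classes $[S_{i}]$ of the simple representations, and these go to the standard basis vectors $\underline{\dim} S_{i} = e_{i}$; hence the map is an isomorphism $\K(\Rep(\Gamma,\Om)) \xrightarrow{\ \sim\ } \Z^{Q_{0}}$. Identifying $\Z^{Q_{0}}$ with the root lattice via $e_{i} \leftrightarrow \alpha_{i}$, the bijection of part (2) between indecomposables and positive roots is exactly the restriction of this isomorphism. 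The step I expect to be the real obstacle is the reachability claim in the Dynkin case: showing that iterating the reflection functors annihilates every indecomposable, which combines the positive-definiteness of $q$ with the careful bookkeeping behind the BGP lemmas on how $S_{i}^{\pm}$ interact with the ``no $S_{i}$ summand'' hypothesis; and --- for $\mathbb{K}$ not algebraically closed --- one must additionally verify that $\End(X)$ is local for $X$ indecomposable, which is what forces $q(\underline{\dim} X) = \dim\End(X) - \dim\Ext^{1}(X,X)$ to be positive.
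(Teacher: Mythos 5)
Your proposal is the standard Bernstein--Gelfand--Ponomarev proof, which is exactly the route the paper takes: Theorem~\ref{t:gabriel} is quoted from \cite{gab} without proof, and the reflection-functor/Coxeter-functor machinery you use (admissible sink sequences, annihilation of indecomposables, additivity of $\underline{\dim}$ on the simples) is precisely what the paper recalls from \cite{bgp} in Section~\ref{s:quivers} and Theorem~\ref{t:bgp}. The argument is correct, including the finite-field caveat and the need for locality of $\End(X)$; no discrepancies with the paper's (cited) approach.
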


\begin{remark} One can extend this to obtain the full root system by considering the 2-periodic derived category $\D^{b} (\Gamma , \Om) /T^{2}$, where $\D^{b}$ denotes the bounded derived category, and $T$ denotes the translation functor. (See \cite{px2} for more details.)
\end{remark}

\subsection{BGP Reflection Functors} 
Let $\overrightarrow{\Gamma} = (\Gamma , \Om)$ be a quiver and let $i \in \Gamma_{0}$ be a sink (or source) in the orientation $\Om$. Define a new orientation $s_{i} \Om$ of $\Gamma$ by reversing all arrows at $i$. Hence the sink becomes a source (and the source becomes a sink).

Let $i \in \Gamma$ be a sink (or source) in the orientation $\Om$ and consider the categories $\Rep (\Gamma , \Om)$ and $\Rep (\Gamma , s_{i} \Om)$. Despite the fact that Gabriel's Theorem establishes a bijection between the Grothendieck groups of $\Rep (\Gamma , \Om)$ for different choices of $\Om$, it is not the case that these categories are equivalent for different choices of $\Om$. 

However there are nice functors $S_{i}^{\pm} : \Rep (\Gamma , \Om) \to \Rep (\Gamma , s_{i} \Om)$ between them. (See \cite{bgp}.)  These functors are left (respectively right) exact. Although these functors are not equivalences, the corresponding derived functors $RS_{i}^{+}$ (respectively $LS_{i}^{-}$) provides an equivalence of triangulated categories $\D^{b} (\Gamma, \Om) \to \D^{b} (\Gamma , s_{i} \Om)$. A brief description of these functors is given for the reader's convenience.
 
Let $\overrightarrow{\Gamma} = (\Gamma, \Om)$ and let $i$ be a source for $\Om$. Define a functor $S_{i}^{+} : \Rep (\Gamma, \Om) \to \Rep (\Gamma , s_{i} \Om)$ by setting
$$ S_{i}^{+} X (j) = \begin{cases}
coker (X(i) \to \bigoplus_{i \to k} X(k)) &\text{if} \ \ i=j \\
X(j) &\text{otherwise.}
\end{cases}
$$ 
For an edge $e: j \to k$ in $\Om$ let $\overline{e}$ denote the corresponding edge in $s_{i} \Om$, the map $S_{i}^{+} (x_{\overline{e}})$ is given by 
$$S_{i}^{+} (x_{\overline{e}}) = \begin{cases}
x_{e} &\text{if } s(e) \neq i \\
q_{e} : X(t(e)) \to coker (x_{e} ) &\text{if } s(e) = i
\end{cases}
$$
where $q_{e}$ is the quotient map $X(t(e)) \to coker (x_{e})$.

Similarly for $i$ a sink, define $S_{i}^{-} : \Rep (\Gamma, \Om) \to \Rep (\Gamma , s_{i} \Om)$ by 
$$ S_{i}^{-} X (j) = \begin{cases}
ker(\bigoplus_{i\to k} X(k) \to X(i) ) &\text{if} \ \ i=j \\
X(j) &\text{otherwise.}
\end{cases}
$$ 
For an edge $e: k \to j$ in $\Om$ let $\overline{e}$ denote the corresponding edge in $s_{i} \Om$, the map $S_{i}^{-} (x_{\overline{e}})$ is given by 
$$S_{i}^{-} (x_{\overline{e}}) = \begin{cases}
x_{e} &\text{if } t(e) \neq i \\ 
(\iota_{k}, 0 ) : ker (x_{e}) \to \oplus X(s(e)) &\text{if } t(e) = i
\end{cases}
$$
where $\iota_{k}$ is the embedding of $ker (x_{e})$ into $X(s(e))$.

These are the well-known ``BGP reflection functors" (see \cite{bgp}). Note that the derived functors $RS_{i}^{+}$ and $LS_{i}^{-}$ are inverse to each other. The name reflection functor comes from the action of these functors on the Grothendieck group. In the setting of Gabriel's Theorem, the Grothendieck group of $\D^{b} (\overrightarrow{\Gamma}) /T^{2}$ is isomorphic to the root lattice, indecomposable objects correspond to the roots and the reflection functors act on the Grothendieck group as the corresponding simple reflections in the Weyl group of the associated root system.

In \cite{bgp}, Bernstein, Gelfand and Ponomarev used these reflection functors to give an alternate proof of Gabriel's Theorem from a more Lie theoretic point of view. We will modify this proof, so we shall briefly explain their construction here.

In the rest of this section we consider only the BGP reflection functors at $i \in \Gamma$ a sink, so we drop the superscript and write $S_{i}$ for the reflection functor to simplify notation.

Let $\Gamma$ be of type $A,D,E$ and $\Om$ an orientation of $\Gamma$. For each $i \in \Gamma$ denote by $\SSS_{i}$ the simple representation defined by $$\SSS_{i} (j) = \begin{cases}
\mathbb{K} &\text{ if } i=j \\
0 &\text{ otherwise.} \\
\end{cases}$$
It is well known that these are the only simple representations (see \cite{crawley-boevey} for details).
\begin{defi} Let $W$ denote the Weyl group. A reduced expression $w=s_{i_{1}} s_{i_{2}} \cdots s_{i_{m}}$ for a word $w\in W$ is called {\em adapted to $\Om$} if $i_{k}$ is a sink for the orientation $s_{i_{k-1}} \cdots s_{i_{2}} s_{i_{1}} \Om$. (In particular $i_{1}$ is a sink for $\Om$.)
\end{defi}

\begin{example}
For the $A_{3}$ quiver $( \Gamma, \Om )$ given by $\bigcirc \longleftarrow \bigcirc \longleftarrow \bigcirc$, the reduced expression $w_{0}= s_{1} s_{2} s_{3} s_{1} s_{2} s_{1}$ is adapted to $\Om$.
\end{example}

Let $w_{0} = s_{i_{1}} s_{i_{2}} \cdots s_{i_{l}}$ be a reduced expression for the longest element, which is adapted to $\Om$. Then we can enumerate the positive roots by setting $\gamma_{k} = s_{i_{1}} s_{i_{2}} \cdots s_{i_{k-1}} \alpha_{i_{k}}$, where $\alpha_{i_{k}}$ is the simple root corresponding to vertex $i_{k}$.

\begin{example}
For the case of $A_{3}$ considered in the previous example, we get the following enumeration of the positive roots: $\gamma_{1} = \alpha_{1}, \gamma_{2} = \alpha_{1} + \alpha_{2}, \gamma_{3} = \alpha_{1} + \alpha_{2} + \alpha_{3}, \gamma_{4} = \alpha_{2}, \gamma_{5} = \alpha_{2} + \alpha_{3}, \gamma_{6} = \alpha_{3}$.
\end{example}

Let $\alpha$ be a positive root, and consider the unique expression $\alpha =s_{i_{1}} s_{i_{2}} \cdots s_{i_{k-1}} \alpha_{i_{k}}$. Define an object $X_{\alpha} \in \Rep(\Gamma , \Om)$ by setting $X_{\alpha} = S_{i_{1}} S_{i_{2}} \cdots S_{i_{k-1}} (\SSS_{i_{k}})$, where $\SSS_{i_{k}} \in \Rep(\Gamma , S_{i_{k-1}} \cdots S_{i_{2}} S_{i_{1}} \Om)$ is the simple object corresponding to vertex $i_{k}$. Bernstein, Gelfand and Ponomarev showed that this representation is indecomposable, and that up to isomorphism, these are the only indecomposables. 

\begin{thm}\label{t:bgp} \cite{bgp}
Let $\Gamma$ be a Dynkin diagram of type $A,D,E$, and $\Om$ an orientation of $\Gamma$. Let $\alpha \in R$ be a positive root, and $X_{\alpha}$ the object defined above. Then $X_{\alpha}$ is indecomposable, and $\underline{\dim}(X_{\alpha}) = \alpha$. Moreover, any indecomposable object is isomorphic to $X_{\alpha}$ for some $\alpha$.
\end{thm}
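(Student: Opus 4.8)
The plan is to reduce Theorem~\ref{t:bgp} to two properties of the BGP reflection functors and then induct along the adapted reduced word. Write $S_i$ for the reflection functor at a vertex $i$ on the orientation at hand (the sink functor when $i$ is a sink, the source functor when $i$ is a source), and $\tilde S_i$ for its partner going the opposite way; since the reduced word is adapted to $\Om$, at every step below the vertex being reflected has exactly the type (sink or source) needed for $S_i$ to be defined, so all the compositions that appear make sense.

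\textbf{Step 1: behaviour of the functors.} First I would prove the following. Let $i$ be a sink for $\Om$. Then $S_i(\SSS_i)=0$, and for every indecomposable $X\in\Rep(\Gamma,\Om)$ with $X\not\cong\SSS_i$ the structure map $\bigoplus_{i\to k}X(k)\to X(i)$ is surjective; consequently $S_iX$ is indecomposable, $\underline{\dim}(S_iX)=s_i(\underline{\dim}X)$, and $\tilde S_iS_iX\cong X$. The mirror statement holds at a source, so $S_i$ and $\tilde S_i$ restrict to mutually inverse bijections between isomorphism classes of indecomposables not isomorphic to $\SSS_i$ on the two sides. The surjectivity is the crux: if the image were a proper subspace $U\subsetneq X(i)$, any vector-space complement of $U$ would split off a direct summand of $X$ isomorphic to a sum of copies of $\SSS_i$, forcing $X\cong\SSS_i$. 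Granting it, the dimension formula drops out of $\dim\ker$ and the (simply-laced) formula for $s_i$ on $\Z^{Q_0}$, and $\tilde S_iS_iX\cong X$ — together with the indecomposability of $S_iX$, since then $\End(S_iX)\cong\End(X)$ is local — is a diagram chase through the exact sequences defining $S_i$ and $\tilde S_i$. I expect this step to be the main obstacle; everything after it is essentially formal.

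\textbf{Step 2: existence and the dimension formula.} Fix an adapted reduced expression $w_0=s_{i_1}\cdots s_{i_l}$ for the longest element and write the given positive root as $\alpha=\gamma_k=s_{i_1}\cdots s_{i_{k-1}}(\alpha_{i_k})$. Set $\beta_j=s_{i_j}s_{i_{j+1}}\cdots s_{i_{k-1}}(\alpha_{i_k})$ for $1\le j\le k$, so $\beta_k=\alpha_{i_k}$ and $\beta_1=\alpha$. The word $s_{i_j}\cdots s_{i_l}$ is reduced, being a terminal segment of a reduced word, and $\beta_j$ is one of its inversions, hence a positive root; in particular $\beta_{j+1}\ne\alpha_{i_j}$, since otherwise $\beta_j=s_{i_j}(\alpha_{i_j})=-\alpha_{i_j}$ would be negative. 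Now put $Y_k=\SSS_{i_k}$ and $Y_j=S_{i_j}(Y_{j+1})$ for $j=k-1,\dots,1$, so that $Y_1=X_\alpha$. A downward induction on $j$ using Step 1 shows each $Y_j$ is indecomposable with $\underline{\dim}(Y_j)=\beta_j$: indeed $Y_{j+1}$ is indecomposable with dimension vector $\beta_{j+1}\ne\alpha_{i_j}$, hence $Y_{j+1}\not\cong\SSS_{i_j}$, so Step 1 applies and $\underline{\dim}(Y_j)=s_{i_j}(\beta_{j+1})=\beta_j$. Taking $j=1$ yields that $X_\alpha$ is indecomposable with $\underline{\dim}(X_\alpha)=\alpha$.

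\textbf{Step 3: completeness.} Let $Z\in\Rep(\Gamma,\Om)$ be indecomposable, $d=\underline{\dim}(Z)$ a nonzero vector with nonnegative entries, and form $Z_0=Z$, $Z_r=S_{i_r}(Z_{r-1})$ along the adapted word. I claim there is a least $r$ with $0\le r\le l-1$ and $Z_r\cong\SSS_{i_{r+1}}$. If not, Step 1 gives, for every $r\le l$, that $Z_r$ is indecomposable with $\underline{\dim}(Z_r)=s_{i_r}\cdots s_{i_1}(d)$; for $r=l$ this equals $w_0(d)$, which is nonzero with all coordinates $\le 0$ (because $w_0$ sends every positive root to a negative one, hence the positive cone into the negative cone) — impossible for a dimension vector. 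So such an $r$ exists, and since each $Z_{s-1}$ with $s\le r$ is indecomposable and not isomorphic to $\SSS_{i_s}$, the reflections may be undone using Step 1: $Z\cong\tilde S_{i_1}\cdots\tilde S_{i_r}(\SSS_{i_{r+1}})$. Unwinding the orientations, the functors on the right are exactly those appearing in the definition of $X_{\gamma_{r+1}}$, where $\gamma_{r+1}=s_{i_1}\cdots s_{i_r}(\alpha_{i_{r+1}})$ is a positive root; hence $Z\cong X_{\gamma_{r+1}}$. Finally $\underline{\dim}$ separates the $X_\alpha$, since distinct positive roots have distinct dimension vectors, so $\alpha\mapsto[X_\alpha]$ is the asserted bijection.
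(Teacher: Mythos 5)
Your proof is correct and is essentially the classical Bernstein--Gelfand--Ponomarev argument: the paper gives no proof of Theorem~\ref{t:bgp} (it is quoted with the citation \cite{bgp}), and your Steps 1--3 --- the surjectivity-at-a-sink lemma with its consequences for $S_i$, the induction along the adapted reduced word via the inversions $\beta_j$, and the $w_0$-sends-the-positive-cone-to-the-negative-cone argument for completeness --- are exactly the proof in the cited reference. Nothing to correct.
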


\subsection{Auslander-Reiten Quiver and Height Functions}
Given an A,D,E Dynkin diagram $\Gamma$ with Coxeter number $h$ define a quiver $\Gammahat$ by 

\begin{equation}\label{e:Ihat}
\Gammahat = \{ (i,n) \subset \Gamma \times \Z_{2h} \st n+p(i) \equiv 0 \mod 2 \}
\end{equation}
where vertex $(i,n)$ is connected to vertex $(j,n+1)$ for $i-j$ in $\Gamma$. The quiver $\Gammahat$ is called the ``{\em periodic Auslander-Reiten quiver}," and can be realised as the Auslander-Reiten quiver of the category $D^{b}(\Gamma , \Om)/T^{2}$, where $\D^{b}$ denotes the bounded derived category and $T$ is the translation functor. 

\begin{example}
The quiver $\Gammahat$ for the graph $A_{3}$ is shown below. Recall that $\Gammahat$ is cyclic, so the arrows leaving the top level are the same as the incoming arrows at the bottom level.
$$
\xy
(0,0)*{\bigcirc}="A"; (10,10)*{\bigcirc}="B"; (0,20)*{\bigcirc}="C"; (10, 30)*{\bigcirc}="D"; (0,40)*{\bigcirc}="E"; (10,50)*{\bigcirc}="F"; (0,60)*{\bigcirc}="G"; (10,70)*{\bigcirc}="H"; (5,75)*{}="I"; (5,-5)*{}="P"; 
(20,0)*{\bigcirc}="J"; (20,20)*{\bigcirc}="K"; (20,40)*{\bigcirc}="L"; (20,60)*{\bigcirc}="M"; (15,75)*{}="N"; (15,-5)*{}="Q"
\ar "B"; "A" ;
\ar "C"; "B";
\ar "D"; "C";
\ar "E"; "D";
\ar "F"; "E";
\ar "G"; "F";
\ar "H"; "G";
\ar "I"; "H";
\ar "A"; "P";

\ar "B"; "J";
\ar "K"; "B";
\ar "D"; "K";
\ar "D" ; "L"
\ar "F"; "L";
\ar "M"; "F";
\ar "H"; "M";
\ar "N"; "H";
\ar "J"; "Q";
\endxy
$$
\end{example}

There is a map $\tau : \Gammahat \to \Gammahat$ defined by $\tau (i,n) = (i,n+2)$.

A {\em height function} $h: \Gamma \to \Z_{2h}$ is a function which satisfies the condition that $h(i) = h(j) \pm 1$ for $i-j$ in $\Gamma$. Any height function $h$ defines an orientation $\Om_{h}$ of $\Gamma$, given by $i \to j$  in $\Om_{h}$ if $h(j) = h(i) + 1$ and $i-j$ in $\Gamma$.

In \cite{kt}, a combinatorial construction of $R$ from $\Gamma$ is given, which realizes $R$ in terms of the quiver $\Gammahat$. For convenience, we summarize the results of \cite{kt} in Theorem~\ref{t:kt1}.

\begin{thm}\label{t:kt1}
Let $\Gamma$ be an $A,D,E$ Dynkin diagram, let $R$ denote the corresponding root system and $W$ its Weyl group. Fix $C\in W$ a Coxeter element. Then there is a canonical bijection $\Phi : R\to \Gammahat$ with the following properties:
\begin{enumerate}
\item It identifies the Coxeter element $C$ with the ``twist" $\tau: \Gammahat \to \Gammahat$. Hence the natural projection map $\pi : \Gammahat \to \Gamma$  given by $\pi (i,n) = i$ gives a bijection between orbits of the Coxeter element and vertices of the Dynkin diagram.
\item It gives a bijection between simple systems $\Pi$ compatible with $C$ and height functions $h: \Gamma \to \Gammahat$. 
\item For each height function $h$ there is an explicit description of the corresponding positive roots and negative roots as disjoint connected subquivers of $\Gammahat$, as well as a reduced expression for the longest element $w_{0}$ in the Weyl group.
\item There is a de-symmetrization $\< \cdot , \cdot \>$ of the inner product on $R$, which is analogous to the Euler form $\< \cdot , \cdot \>_{\overrightarrow{\Gamma}}$ in quiver theory. Moreover, under the bijection $\Phi$ this form admits an explicit description in terms of paths in $\Gammahat$.
\end{enumerate}
\end{thm}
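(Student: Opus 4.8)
The full argument is carried out in \cite{kt}; here I outline the approach one would take. The starting point is the identification of $\Gammahat$ with the Auslander--Reiten quiver of the $2$-periodic orbit category $\D^{b}(\Gamma,\Om)/T^{2}$. Combining Gabriel's Theorem (Theorem~\ref{t:gabriel}) with the derived refinement noted in the remark following it, the indecomposable objects of this triangulated category are in bijection with the roots $R$, and this bijection intertwines the class map into the Grothendieck group with $\underline{\dim}$. This produces the map $\Phi \colon R \to \Gammahat$. One then checks that $\Phi$ depends only on the Coxeter element $C$ determined by $\Om$ (and not on $\Om$ itself), because the orbit categories attached to different orientations are identified by the derived reflection functors $RS_{i}^{+}$, $LS_{i}^{-}$, which on the AR quiver implement only a re-labelling compatible with the twist.

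For part (1), the Coxeter element $C$ acts on the root lattice exactly as the Coxeter functor (the composite of all BGP reflection functors taken in a sink-to-source order) acts on $\K$; and on $\D^{b}(\Gamma,\Om)/T^{2}$ the Coxeter functor \emph{is} the Auslander--Reiten translation, which on $\Gammahat$ is precisely $\tau(i,n)=(i,n+2)$. Hence $\Phi\circ C = \tau\circ\Phi$, and since $\tau$-orbits in $\Gammahat$ are indexed by the first coordinate, the projection $\pi$ descends to a bijection from $C$-orbits to $\Gamma_{0}$. For part (2), a height function $h$ is the same datum as a slice of $\Gammahat$ (a full subquiver meeting each $\tau$-orbit once, with induced orientation $\Om_{h}$); slices correspond to orientations of $\Gamma$, equivalently to reduced words for $C$ up to commutation, equivalently to simple systems $\Pi$ compatible with $C$, the slice in question being $\Phi(\Pi)$. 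One verifies that the congruence $n+p(i)\equiv 0 \bmod 2$ built into the definition of $\Gammahat$ is exactly the condition making $h$ well defined modulo the period $2h$.

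For part (3), one knits the preprojective component outward from the slice $\Phi(\Pi)$: applying the reflection functors in the order adapted to $\Om_{h}$ carries the simple objects through a connected fundamental domain for $\tau$ inside $\Gammahat$, and this region is declared the set of positive roots; its $\tau$-translate (equivalently, its complement in one period) gives the negative roots, and recording the sink-source sequence traced out along the way yields the adapted reduced expression for $w_{0}$. For part (4), one transports the Euler form $\<X,Y\>_{\overrightarrow{\Gamma}} = \dim\Hom(X,Y) - \dim\Ext^{1}(X,Y)$ along $\Phi$; since the hereditary category $\Rep(\Gamma,\Om)$ has its $\Hom$ and $\Ext^{1}$ spaces computed combinatorially from the mesh structure of the AR quiver, the transported form becomes a signed count of paths in $\Gammahat$, and its symmetrization recovers the inner product on $R$.

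The main obstacle is not any single step but the \emph{coherence} of the whole package: showing that the one bijection $\Phi$ simultaneously realizes all four structures, and in particular that the path-theoretic description of the de-symmetrized form in (4) is independent of the orientation used to set it up and genuinely matches the inner product on $R$ after symmetrization. Secondarily, the bookkeeping in part (3) for an arbitrary height function --- rather than the standard monotone one --- is where the combinatorial work is heaviest, since one must track how the fundamental domain for $\tau$ deforms as the slice is moved across meshes.
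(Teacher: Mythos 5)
The paper offers no proof of this statement at all --- it is explicitly presented as a summary of results imported from \cite{kt} (``For convenience, we summarize the results of \cite{kt}\ldots''), so there is nothing internal to compare your argument against, and your deferral to that reference is exactly what the paper does. Your sketch of the underlying strategy (realizing $\Gammahat$ as the AR quiver of the $2$-periodic orbit category, identifying the Coxeter element with the AR translation $\tau$, matching slices/height functions with compatible simple systems, and reading the de-symmetrized Euler form off the mesh combinatorics) is a reasonable account of the circle of ideas behind the cited results and raises no objections.
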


%%%%%%%%%%%%%%%%%%%%%%%%%%%%%%%%%%%%%%%%%%%%%%%%%%%%%%%%%%%
\section{Super-Representations of a Quiver}\label{s:superrep}
In this section we introduce a notion of a ``super-representation" of a coloured quiver, and describe the corresponding category. We then define the analogue of the BGP reflection functors for the super-category.

For a $\Z_{2}$-coloured graph $\Gamma_{col}$ with underlying graph $\Gamma$, let  $p: \Gamma_{col} \to \Z_{2}$ be the parity function, denote by $\Gamma_{n} = \{\text{vertices coloured by }n \} = \{ i \in \Gamma \ | \ p(i) =n\} $, and write $\Gamma_{col} = \Gamma_{0} \cup \Gamma_{1}$. For a coloured graph, we can associate a quiver $(\Gamma_{col}, \Om)$ by orienting the edges.\\

\begin{defi}
A {\em super-representation} $X$ of a coloured quiver $( \Gamma_{col} ,\Om)$ is a choice of finite dimensional $\Z_{2}$-graded vector space $X (i) = X_{0}(i) \oplus X_{1}(i)$ for each $i \in \Gamma_{col}$ and homogeneous linear maps $x_{e} : X (s(e)) \to X (t(e))$ for every edge $e$, whose degree is given by the formula $\Deg (x_{e}) = p(s(e)) + p (t(e))$.
\end{defi}

\begin{defi} A {\em morphism} $\Phi: X \to Y$ between super-representations of $( \Gamma , \Om)$ is a collection of homogeneous linear maps $\phi_{i} : X(i) \to Y(i)$ such that for every edge $e: i \to j$ the following diagram is commutative:
$$
\xymatrix{
X(i) \ar[r]^{x_{e}} \ar[d]^{\phi_{i}} & X(j) \ar[d]_{\phi_{j}} \\
Y(i) \ar[r]^{y_{e}} & Y(j) \\
}$$
\end{defi}

Denote the abelian category of super-representations of $(\Gamma, \Om)$ by $\SRep (\Gamma , \Om)$. 

Let $P: \SVect \to \SVect$ be the parity change functor for super vector spaces. It is defined by $P(\mathbb{K} ^{n|m}) = \mathbb{K}^{m|n}$, and if $T: \mathbb{K}^{n|m} \to \mathbb{K}^{k|l}$ is given by a block matrix $T=\left(\begin{array}{c|c} A & B \\\hline C & D\end{array}\right)$, then $P(T) =\left(\begin{array}{c|c} D & C \\\hline B & A\end{array}\right) : \mathbb{K}^{m|n} \to \mathbb{K}^{l|k}$.

This induces a parity change functor on $\SRep (\Gamma , \Om)$, which we will also denote by $P : \SRep (\Gamma , \Om) \to \SRep (\Gamma , \Om)$. 

\begin{lemma}\label{l:parity}
For $X \in \SRep (\Gamma , \Om)$, $P(X) \simeq X$.
\end{lemma}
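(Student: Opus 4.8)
I want to show that every super-representation $X$ of $(\Gamma,\Om)$ is isomorphic to its parity-shift $P(X)$. The key observation is the degree constraint in the definition of a super-representation: along an edge $e$, the map $x_e$ has degree $p(s(e))+p(t(e))$, i.e. it preserves parity when $s(e)$ and $t(e)$ have the same colour and reverses parity when they have opposite colour. So I would build the isomorphism $\Phi\colon X\to P(X)$ vertex by vertex using, at each vertex $i$, a carefully normalized parity-change map on the super vector space $X(i)$, and then check that the squares commute for every edge.

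First I would recall that for a single super vector space $V=V_0\oplus V_1$ there is an isomorphism $\sigma_V\colon V\xrightarrow{\ \sim\ } P(V)$; concretely, choosing bases, $P(V)$ is the same underlying space with the grading swapped, and $\sigma_V$ can be taken to be any fixed linear isomorphism identifying $V_0$ with the degree-$1$ part of $P(V)$ and $V_1$ with the degree-$0$ part (so $\sigma_V$ is an odd map $V\to P(V)$). The point is that $\sigma_V$ exists only when $\dim V_0=\dim V_1$ — wait, that is false in general, so I must be more careful: $P(V)$ has dimensions $(m,n)$ when $V$ has $(n,m)$, and a homogeneous isomorphism $V\to P(V)$ requires $n=m$. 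Hence the honest statement I need is that the \emph{underlying ungraded} spaces agree and that one can build a degree-$1$ isomorphism $\tilde\sigma_V\colon V\to P(V)$ sending $V_0$ isomorphically onto $(P(V))_1=V_0$ (as sets) and $V_1$ onto $(P(V))_0=V_1$; this is just the identity map on underlying vector spaces, reinterpreted as an odd map, and it is always an isomorphism in $\SVect$ of odd degree. I would fix such $\Phi_i := \tilde\sigma_{X(i)}$ at every vertex, possibly inserting a sign $(-1)^{?}$ depending on $p(i)$ to make the edge squares commute.

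Next I would verify commutativity of the square for an edge $e\colon i\to j$: I need $y_e\circ\Phi_i=\Phi_j\circ x_e$ where $y_e$ is the corresponding map in $P(X)$, namely $y_e=P(x_e)$. Unwinding $P$ on morphisms (it swaps the off-diagonal and diagonal blocks, which amounts to post- and pre-composing with the odd identifications), $P(x_e)$ as a map of underlying spaces is literally $x_e$ again, but its degree is recomputed relative to the swapped gradings, which is exactly $\Deg(x_e)+p(i)+p(j)\pmod 2$. Comparing with $\Deg(x_e)=p(i)+p(j)$, one sees $P(x_e)$ has degree $2(p(i)+p(j))\equiv 0$, consistent with $P(X)$ being a super-representation. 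The commuting square then reduces to an identity of linear maps on underlying spaces, up to a sign governed by the Koszul rule for composing the odd maps $\Phi_i,\Phi_j$ with $x_e$; choosing $\Phi_i$ to carry an extra factor like $(-1)^{p(i)}$ (or sorting the vertices of each edge by some fixed convention) absorbs that sign. Finally I would note that each $\Phi_i$ is bijective, so $\Phi$ is an isomorphism in $\SRep(\Gamma,\Om)$.

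\textbf{Main obstacle.} The only real subtlety is bookkeeping the Koszul signs so that all edge squares commute simultaneously, and convincing oneself the chosen normalizations are globally consistent. Since $\Gamma$ will in practice be a tree (type $A_{n+m-1}$), there is no cocycle obstruction: one can propagate the sign choice along the tree and no inconsistency can arise around a cycle. So after the sign choice the verification is purely mechanical. I do not expect any genuine difficulty; the statement is essentially the observation that the parity-change functor, in this particular category where edge maps have the ``diagonal'' degree $p(s)+p(t)$, is (non-canonically) isomorphic to the identity functor.
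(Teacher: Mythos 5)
Your proof is correct and is essentially the paper's own argument: at each vertex take the odd ``identity'' map $X(i)\to PX(i)$ given by parity change $\mathbb{K}^{n|m}\to\mathbb{K}^{m|n}$, and the edge squares commute because $P(x_e)$ is literally $x_e$ on underlying spaces. The Koszul-sign bookkeeping you worry about is moot — composition of homogeneous linear maps in this category carries no sign, so no normalization is needed and the squares commute on the nose.
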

\begin{proof}
By definition of morphisms, we can construct an isomorphism $\Phi : X \to P(X)$, by setting $\phi_{i} : X(i) \to PX(i)$ to be the degree 1 linear map given by parity change $\mathbb{K} ^{n|m} \to \mathbb{K}^{m|n}$.
\end{proof}

\begin{remark} 
Note that in the category of super-vector spaces, morphisms are only the linear maps of degree 0, so parity change does not give an isomorphism. However, we allow linear maps of degree 1 as well, so that parity change does give an isomorphism. 
\end{remark}

\begin{prop}\label{p:indec}
Any object $X \in \SRep (\Gamma , \Om)$ is of the form $X= X^{\prime} \oplus X^{\prime \prime}$, where for every vertex $i$ the vector spaces $X^{\prime} (i), X^{\prime \prime} (i)$ are purely even or odd, and $p(X^{\prime} (i)) \neq p( X^{\prime \prime} (i))$.

In particular, if $X$ is indecomposable, then $X= X^{\prime} \oplus \mathbb{O}$ or $X = \mathbb{O} \oplus X^{\prime \prime}$.
\end{prop}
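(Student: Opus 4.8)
The plan is to decompose $X$ vertex-by-vertex according to the $\Z_2$-grading and then check that this vertex-wise decomposition is compatible with all the structure maps $x_e$. For each vertex $i\in\Gamma_{col}$, write $X(i)=X_0(i)\oplus X_1(i)$ as given by the grading. The natural candidate is to set $X'(i)$ to be the summand of $X(i)$ in degree $p(i)$ (the ``parity-matching'' part) and $X''(i)$ the summand in degree $p(i)+1$; concretely, $X'(i)=X_{p(i)}(i)$ and $X''(i)=X_{p(i)+1}(i)$. Then each $X'(i)$ is purely of parity $p(i)$ and each $X''(i)$ is purely of parity $p(i)+1$, so indeed $p(X'(i))\neq p(X''(i))$ for every $i$, as required.

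First I would verify that $X'$ and $X''$ are sub-super-representations, i.e.\ that each structure map $x_e\colon X(s(e))\to X(t(e))$ restricts to maps $X'(s(e))\to X'(t(e))$ and $X''(s(e))\to X''(t(e))$, with no ``cross terms.'' This is exactly where the degree condition $\Deg(x_e)=p(s(e))+p(t(e))$ from the definition of super-representation does the work: a homogeneous map of this degree sends the degree-$p(s(e))$ part of $X(s(e))$ into the degree-$p(s(e))+p(s(e))+p(t(e))=p(t(e))$ part of $X(t(e))$, i.e.\ $X'(s(e))$ into $X'(t(e))$; and it sends the degree-$(p(s(e))+1)$ part into the degree-$(p(t(e))+1)$ part, i.e.\ $X''(s(e))$ into $X''(t(e))$. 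Hence $x_e=x_e|_{X'}\oplus x_e|_{X''}$, and $X=X'\oplus X''$ as objects of $\SRep(\Gamma,\Om)$, with $X'(i)$ purely even or purely odd (of parity $p(i)$) and $X''(i)$ purely even or purely odd (of parity $p(i)+1$), which is the first assertion.

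For the ``in particular'' clause: if $X$ is indecomposable, then in the decomposition $X=X'\oplus X''$ one of the two summands must be zero. Writing $\mathbb{O}$ for the zero super-representation, this gives $X=X'\oplus\mathbb{O}$ or $X=\mathbb{O}\oplus X''$, as claimed. (One should note in passing that by Lemma~\ref{l:parity} the two cases are not genuinely distinct up to isomorphism, since $P$ interchanges them, but the stated dichotomy is the literal content.) I do not expect a serious obstacle here; the only thing to be careful about is the bookkeeping of which degree is ``$X'$'' versus ``$X''$'' relative to $p(i)$ — the point of anchoring $X'(i)$ to parity $p(i)$ (rather than to $0$) is precisely so that the structure maps, which have the prescribed nonzero degree $p(s(e))+p(t(e))$ in general, preserve the splitting. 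Everything else is routine linear algebra on graded vector spaces.
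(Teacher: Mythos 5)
Your proposal is correct and follows essentially the same route as the paper: both define $X'(i)$ to be the degree-$p(i)$ summand of $X(i)$ and $X''(i)$ the degree-$(p(i)+1)$ summand, and both use the degree condition $\Deg(x_e)=p(s(e))+p(t(e))$ to see that the structure maps respect this splitting. Your verification that there are no cross terms is slightly more explicit than the paper's one-line remark, but the argument is identical.
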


\begin{proof}
If $X(i) = \mathbb{K} ^{n_{i} | m_{i}}$, set 

$$X^{\prime}(i) =
\begin{cases}
\mathbb{K}^{n_{i} | 0} &\text{ if } p(i) = 0 \\
\mathbb{K}^{0|m_{i}} &\text{ if } p(i) = 1\\
\end{cases}
$$
and 
$$X^{\prime \prime} (i) =
\begin{cases}
\mathbb{K}^{ 0|m_{i}} &\text{ if } p(i) = 0 \\
\mathbb{K}^{n_{i}| 0} &\text{ if } p(i) = 1.\\
\end{cases}
$$
Define $x_{e} ^{\prime} = x_{e} |_{X^{\prime}}$ to be the restriction of $x_{e}$ to $X^{\prime}$. Simarly, define $x_{e} ^{\prime \prime} = x_{e} |_{X^{\prime \prime}}$ to be the restriction of $x_{e}$ to $X^{\prime \prime}$. Note that if $e: i \to j$, then $x_{e}$ is homogeneous, and $\Deg (x_{e}) = p(i) + p(j)$, so these maps are well-defined.
\end{proof}

\begin{remark} Note that this description allows us to define the ``parity" of an indecomposable object $X$ by setting $p(X) = \sum_{i} p(X(i))$. However, Lemma~\ref{l:parity} implies that this does not define a well-defined function on isomorphism classes of indecomposables, since $X \simeq P(X)$, but these objects have opposite parities. It turns out, however, that this definition of parity will still be useful. 
\end{remark}

Let $\Gamma_{col}$ be a coloured graph, with underlying graph $\Gamma$.

Define a functor $G : \Rep (\Gamma , \Om) \to \SRep (\Gamma_{col} , \Om)$ as follows:
$$GX (i) =
\begin{cases}
X(i) \oplus \mathbb{O} \text{ if } p(i) = 0 \\
\mathbb{O} \oplus X(i) \text{ if } p(i) = 1
\end{cases}
$$ 
and if $e : i \to j$
$$ Gx_{e} =
\begin{cases}
\left(\begin{array}{c|c} x_{e} & 0 \\\hline 0 & 0 \end{array}\right) \text{ if } p(i) = p(j) = 0 \\
\left(\begin{array}{c|c} 0 & 0 \\\hline 0 & x_{e} \end{array}\right) \text{ if } p(i) = p(j) = 1 \\
\left(\begin{array}{c|c} 0 & x_{e} \\\hline 0 & 0 \end{array}\right) \text{ if } p(i) = 0, p(j) = 1 \\
\left(\begin{array}{c|c} 0 & 0 \\\hline x_{e} & 0 \end{array}\right) \text{ if } p(i) = 1, p(j) = 0. \\
\end{cases}
$$
Let $F: \SRep  (\Gamma , \Om ) \to \Rep (\Gamma , \Om)$ be the forgetful functor given by ignoring the $\Z_{2}$ gradings of the vector spaces $X(i)$.

\begin{prop}
The functor $G$ is an equivalence of categories, with inverse given by $F$.
\end{prop}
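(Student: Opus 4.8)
The plan is to show that $F$ and $G$ are mutually quasi-inverse by exhibiting natural isomorphisms $FG \cong \id_{\Rep(\Gamma,\Om)}$ and $GF \cong \id_{\SRep(\Gamma_{col},\Om)}$. First I would verify that $G$ is actually well-defined as a functor into $\SRep(\Gamma_{col},\Om)$: for each edge $e\colon i\to j$ one checks that the displayed block matrix $Gx_e$ is homogeneous of degree $p(i)+p(j)$. This is immediate from the four cases in the definition — when $p(i)=p(j)$ the map is diagonal (degree $0$) and when $p(i)\neq p(j)$ it is off-diagonal (degree $1$) — and one checks functoriality of $G$ on morphisms by the same case analysis, noting that $G$ sends $\phi_i\colon X(i)\to Y(i)$ to the degree-$0$ map $\phi_i\oplus 0$ or $0\oplus\phi_i$ according to the parity of $i$, and that commutativity of the morphism square is preserved block-by-block.

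Next I would treat the easy composite $FG$. By construction $FGX(i)$ forgets the grading on $X(i)\oplus\mathbb{O}$ (or $\mathbb{O}\oplus X(i)$), which is canonically $X(i)$; and $F$ applied to the block matrix $Gx_e$ recovers $x_e$ in every one of the four cases, since in each case the unique nonzero block is exactly $x_e$. So $FG=\id$ on the nose (or at worst up to the canonical identification $V\oplus 0\cong V$), and naturality is clear. The substantive direction is $GF\cong\id_{\SRep}$. Here I would invoke Proposition~\ref{p:indec}: an arbitrary $Y\in\SRep(\Gamma_{col},\Om)$ decomposes as $Y=Y'\oplus Y''$ where each $Y'(i),Y''(i)$ is purely even or purely odd, with $p(Y'(i))=p(i)$ and $p(Y''(i))=p(i)+1$. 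Forgetting gradings, $FY(i)=FY'(i)\oplus FY''(i)$, and then $GFY$ puts the grading back according to $p(i)$. One then constructs an isomorphism $\Theta_Y\colon GFY\to Y$ vertex by vertex: on the $Y'$-summand, $GFY'(i)$ already has the correct parity $p(i)$, so $\theta_i$ is the identity there; on the $Y''$-summand, $GFY''(i)$ has been assigned parity $p(i)$ whereas $Y''(i)$ has parity $p(i)+1$, so $\theta_i$ is the degree-$1$ parity-change map on that block. One must check that these $\theta_i$ commute with the edge maps, which reduces to the observation that conjugating $x_e$ by parity change on source and target reproduces exactly the block form prescribed by $G$ — this is essentially the content of the matrix identities in the definitions of $P$ and $G$, and is where the compatibility of the degree convention $\Deg(x_e)=p(s(e))+p(t(e))$ with the block positions in $Gx_e$ gets used.

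The main obstacle is bookkeeping rather than conceptual: one has to be careful that the parity-change maps chosen on the $Y''$-parts at different vertices are consistent with the sign/degree conventions so that every morphism square commutes, and that $\Theta$ is natural in $Y$ (i.e. commutes with arbitrary morphisms of super-representations, not just the decomposition $Y=Y'\oplus Y''$, which a priori depends on choices). Naturality can be handled by noting that any morphism $\Phi\colon Y\to Z$ is homogeneous and splits into its degree-$0$ and degree-$1$ components, each of which respects the even/odd splitting of each $Y(i)$ up to the parity-change identification; alternatively, once $FG\cong\id$ and $GF\cong\id$ are established objectwise together with functoriality of $F,G$, one can appeal to the standard fact that a functor which is fully faithful and essentially surjective is an equivalence — so it would in fact suffice to prove $G$ is fully faithful and essentially surjective, the latter being precisely Proposition~\ref{p:indec} applied summand-wise. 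I would likely present the argument in that cleaner form: essential surjectivity of $G$ from Proposition~\ref{p:indec}, and full faithfulness by checking that $\Hom$-spaces match, using that a morphism between objects in the image of $G$ is determined by its action on the nonzero graded pieces.
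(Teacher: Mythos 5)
Your argument is correct and follows essentially the same route as the paper: the paper's (one-line) proof is precisely the observation that $F\circ G=\id$ on the nose, while $G\circ F(X)\simeq X$ follows from Proposition~\ref{p:indec} together with the fact that morphisms may be homogeneous of degree $1$, so the anti-aligned summand $X''$ is isomorphic to its parity change $P(X'')$ as in Lemma~\ref{l:parity}. Your write-up merely supplies the bookkeeping, the naturality discussion, and the fully-faithful/essentially-surjective reformulation that the paper leaves implicit.
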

\begin{proof}
It follows from Proposition~\ref{p:indec} and the definition of morphisms in $\SRep (\Gamma_{col} , \Om)$ that $F \circ G (X) = X$ and $G \circ F (X) \simeq X$.
\end{proof}

From this Proposition, we get Part 1 of Gabriel's Theorem for free (see Theorem~\ref{t:gabriel}), and a full description of indecomposable objects. By understanding the indecomposable objects, we can then use the coloured Dynkin diagram and the ideas from \cite{bgp} and \cite{kt} to get a categorical construction of the root system $A(m,n)$ similar to the description of $A,D,E$ root systems given by Gabriel's Theorem, and a combinatorial description of $A(n,m)$ similar to that in \cite{kt}. In particular, we extend the construction of \cite{bgp} by introducing reflection functors for $\SRep$, so that for each root $\alpha$ the indecomposable object $X_{\alpha}$ has the same parity as $\alpha$. This can then be used to colour the vertices in the Auslander-Reiten quiver $\Gammahat$ so that the bijection $R \to \Gammahat$ identifies even and odd roots in $A(n,m)$.

\subsection{Reflection Functors in the Super-Category}
Recall that for Lie superalgebras the construction of the coloured Dynkin diagram depends on the choice of simple roots. In general, not only the colouring, but the underlying graph depends on the choice of simple roots. However, for type $A(n,m)$ the underlying graph (of type $A_{n+m-1}$) is independent of the choice of simple roots. When we replace the simple system $\Pi$ by replacing the odd root $\alpha_{i}$ with $-\alpha_{i}$, the coloured Dynkin diagram is obtained by changing the colour of the vertices adjacent to $i$.

Let $( \Gamma_{col} , \Om)$ be a coloured quiver. If $i \in (\Gamma_{col} , \Om)$ is a sink or source, define a new quiver $(s_{i} \Gamma_{col} , s_{i} \Om)$ as follows:\\
\begin{itemize}
\item If $p(i) = 0$, then $s_{i} \Gamma_{col} = \Gamma_{col}$, and $s_{i} \Om$ is the orientation obtained by reversing all arrows at $i$.\\
\item If $p(i) = 1$, then $s_{i} \Gamma_{col}$ is the coloured graph obtained by changing the colour of all vertices adjacent to $i$, and $s_{i} \Om$ is the orientation obtained by reversing all arrows at $i$. (Compare with Example~\ref{e:ex1}.)\\
\end{itemize}
\begin{example}\label{e:reflections}
Consider the coloured quiver $(\Gamma_{col} , \Om) =  \bigcirc \longrightarrow \bigotimes \longleftarrow \bigcirc$, then\\ 
$(s_{2} \Gamma_{col} , s_{2} \Om) = \bigotimes \longleftarrow \bigotimes \longrightarrow \bigotimes$, while $( s_{1} \Gamma_{col} , s_{1} \Om) = \bigcirc \longleftarrow \bigotimes \longleftarrow \bigcirc$.

Here we colour vertex $i$ by $\bigcirc$ if $p(i) = 0$, and $\bigotimes$ if $p(i) = 1$.
\end{example}

We define reflection functors $\widetilde{S_{i}^{\pm}}$ as follows.

\begin{defi}\label{d:superrefl}
Let $\Gamma_{col}$ be a $\Z_{2}$-coloured graph and $\Om$ be an orientation of $\Gamma_{col}$. Let $d(i,j)$ denote the number of edges between $i$ and $j$ in $\Gamma_{col}$.\\

If $i \in (\Gamma_{col} , \Om)$ a sink, define functors $\widetilde{S_{i}^{\pm}} :\SRep (\Gamma , \Om) \to \SRep (s_{i} \Gamma , s_{i} \Om)$ as follows:

$$\widetilde{S_{i}^{\pm}} (X) (j) = \begin{cases}
P^{p(i)} \circ S_{i}^{\pm} X(j) &\text{ if } d(i,j) \leq 1 \\
X(j) &\text{ if } d(i,j) \geq 2.
\end{cases}
$$

For an edge $e$ in $\Om$ with let $\overline{e}$ denote the corresponding edge in $s_{i} \Om$. (If $s(e) \neq i$, or $t(e) \neq i$, then $\overline{e} = e$.) Then the map $\widetilde{S_{i}^{\pm}} (x_{\overline{e}})$ is given by :
$$\widetilde{S_{i}^{\pm}} (x_{\overline{e}}) = \begin{cases}
P^{p(i)} \circ S_{i}^{\pm} (x_{e}) &\text{ if } s(e) = i, \text{ or if } s(e) \text{ or } t(e) \text{ is adjacent to } i. \\
x_{e} &\text{ otherwise.}
\end{cases}
$$

where $S_{i}^{\pm}$ are defined in Section~\ref{s:quivers}, and $P$ is parity change.
\end{defi}

Note that if $p(i) = 0$, then $\widetilde{S_{i}^{\pm}} = S_{i}^{\pm}$.

\begin{example}
For $X =\stackrel{1|0}{\bigcirc} \longleftarrow \stackrel{1|0}{\bigcirc} \longleftarrow \stackrel{1|0}{\bigcirc} \longrightarrow \stackrel{0|0}{\bigotimes} \longleftarrow \stackrel{0|1}{\bigotimes}$, if we apply the reflection functor $\widetilde{S_{4}^{-}}$ we get $\widetilde{S_{4}^{-}} X = \stackrel{1|0}{\bigcirc} \longleftarrow \stackrel{1|0}{\bigcirc} \longleftarrow \stackrel{0|1}{\bigotimes} \longleftarrow \stackrel{1|1}{\bigotimes} \longrightarrow \stackrel{1|0}{\bigcirc}$. 

The non-zero maps in $X$ are the identity, and the non-zero maps in $\widetilde{S_{4}^{-}} X$ are either the identity or parity change, except the maps leaving vertex $4$, which are the obvious inclusions.
\end{example}

%%%%%%%%%%%%%%%%%%%%%%%%%%%%%%%%%%%%%%%%%%%%%%%%%%%%%%%%%%%
\section{Main Results}\label{s:main}
In this section we give our main results. In particular, we show that the construction of indecomposables given in \cite{bgp} can be extended to $A(n,m)$, using the reflection functors for the category of super-representations of the quiver $(\Gamma_{col} , \Om)$.

To begin, recall the identification $A(n,m) \to A_{n+m-1}$ given by forgetting the grading on $A(n,m)$. Explicitly, this takes $\epsilon_{i} \mapsto e_{i}$ for $1\leq i \leq n$, and $\delta_{i} \mapsto e_{n+i}$ (see Section~\ref{s:prelim}).  For a root $\alpha \in A(n,m)$ consider its image in $\bar{\alpha} \in A_{n+m-1}$. 

Choose a set of simple roots $\Pi$ in $A(n,m)$ and let $\Gamma_{col}$ denote the corresponding coloured Dynkin diagram. Choose an orientation $\Om$ for $\Gamma_{col}$, and consider the quiver $(\Gamma_{col} , \Om)$. Consider the longest element $w_{0} \in W$ for the system $A_{n+m-1}$, and let $w_{0} = s_{i_{1}} s_{i_{2}} \cdots s_{i_{l}}$ be a reduced expression for $w_{0}$ adapted to $\Om$. 

Let $\alpha \in A(n,m)$ be a positive root with respect to $\Pi$, then we have an expression $\bar{\alpha} = s_{i_{1}} s_{i_{2}} \cdots s_{i_{j-1}} \bar{\alpha_{i_{j}}}$. In particular, we see that the root $\alpha$ is simple for the simple system $s_{i_{j-1}} \cdots s_{i_{2}} s_{i_{1}} \Pi$, hence the parity of $\alpha$ is determined by  the parity of vertex $i_{j}$ in the coloured graph $s_{i_{j-1}} \cdots s_{i_{2}} s_{i_{1}} \Gamma_{col}$. 

For each vertex $i\in \Gamma_{col}$ and $p\in \Z_{2}$ define a simple object $\SSS_{i}^{p} \in \SRep (\Gamma_{col} , \Om)$ by setting
$$\SSS_{i}^{p} (j) = \begin{cases}
\mathbb{K}^{1|0} &\text{ if } i=j \text{ and }  p=0\\
\mathbb{K}^{0|1} &\text{ if } i=j \text{ and } p= 1 \\
0 &\text{ otherwise.} \\
\end{cases}$$
In particular, $p(\SSS_{i}^{p})= p$.

Now define an object $X_{\alpha} = \widetilde{S_{i_{1}}^{-}} \widetilde{S_{i_{2}}^{-}} \cdots \widetilde{S_{i_{j-1}}^{-}} (\SSS_{i_{j}}^{p_{j}})$, where $p_{j}=\text{parity of vertex } i_{j}$ in the graph $s_{i_{j-1}} \cdots s_{i_{2}} s_{i_{1}} \Gamma_{col}$.

Recall that for an indecomposable object $X$, we define the parity of $X$, by $p(X) = \sum_{i} p(X(i))$.
\begin{thm}\label{t:main}
Let $\alpha$ be a positive root for $\Pi$, and $\Gamma_{col}$ denote the coloured Dynkin graph determined by $\Pi$. Then the object $X_{\alpha}$ defined above is indecomposable, has dimension vector $\alpha$, and has the same parity as $\alpha$. Moreover, any indecomposable representation of dimension $\alpha$ is isomorphic to $X_{\alpha}$. 
\end{thm}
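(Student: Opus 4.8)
The plan is to reduce the statement to the classical BGP result (Theorem~\ref{t:bgp}) via the equivalence $F : \SRep(\Gamma_{col},\Om) \to \Rep(\Gamma,\Om)$, while tracking parities separately. The key technical fact I would establish first is a compatibility lemma: for $i$ a sink of $(\Gamma_{col},\Om)$, the diagram of functors relating $\widetilde{S_i^{-}}$ on the super side and $S_i^{-}$ on the ordinary side commutes up to the forgetful functor, i.e. $F \circ \widetilde{S_i^{-}} \simeq S_i^{-} \circ F$ as functors $\SRep(\Gamma_{col},\Om) \to \Rep(\Gamma, s_i\Om)$. This is immediate from Definition~\ref{d:superrefl}: applying $F$ kills the parity-change factor $P^{p(i)}$ and the $\Z_2$-grading, leaving exactly the formula for $S_i^{-}$ from Section~\ref{s:quivers}. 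Granting this, since $F(\SSS_{i_j}^{p_j}) = \SSS_{i_j}$ (the ordinary simple at vertex $i_j$), one gets
$$ F(X_\alpha) \;=\; F\bigl(\widetilde{S_{i_1}^{-}}\cdots\widetilde{S_{i_{j-1}}^{-}}(\SSS_{i_j}^{p_j})\bigr) \;\simeq\; S_{i_1}^{-}\cdots S_{i_{j-1}}^{-}(\SSS_{i_j}) \;=\; X_{\bar\alpha}, $$
the classical BGP indecomposable for the root $\bar\alpha \in A_{n+m-1}$. Because $F$ is an equivalence of categories, $X_\alpha$ is indecomposable iff $X_{\bar\alpha}$ is, and Theorem~\ref{t:bgp} gives indecomposability and $\underline{\dim}(X_{\bar\alpha}) = \bar\alpha$; since $F$ preserves dimension vectors (it only forgets the grading), $\underline{\dim}(X_\alpha) = \alpha$. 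The final ``uniqueness'' clause follows the same way: any indecomposable $Y \in \SRep$ of dimension $\alpha$ has $F(Y)$ indecomposable of dimension $\bar\alpha$, hence $F(Y) \simeq X_{\bar\alpha} \simeq F(X_\alpha)$, and applying the quasi-inverse $G$ gives $Y \simeq X_\alpha$.

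The parity claim is the one genuinely new ingredient and I expect it to be the main obstacle. The assertion is that $p(X_\alpha) = \sum_i p(X_\alpha(i))$ equals $p(\alpha)$, where $p(\alpha)$ is the parity of vertex $i_j$ in the rotated coloured graph $s_{i_{j-1}}\cdots s_{i_1}\Gamma_{col}$ (equivalently, the parity of $\alpha$ as a simple root of the rotated simple system). I would prove this by induction on $j$, the length of the BGP word prefix. The base case $j=1$ is clear: $X_\alpha = \SSS_{i_1}^{p_1}$, which by construction has parity $p_1$. For the inductive step I need to understand how one application of $\widetilde{S_i^{-}}$ at a sink $i$ changes the total parity $\sum_k p(X(k))$, and compare it with how the odd/even reflection $s_i$ changes the parity of the relevant simple root. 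The heart of the matter: at vertices at distance $\ge 2$ from $i$ nothing changes; at $i$ itself the space is replaced by $P^{p(i)}\circ S_i^{-}$ of a kernel, so its parity changes by $p(i) + (\text{rank change of the underlying vector space at }i)$; and at neighbours $j$ of $i$, if $p(i)=1$ the colour of $j$ flips, which must be reconciled with the grading on $X(j)$ (unchanged as a vector space but reinterpreted). I would set up a careful bookkeeping—using Proposition~\ref{p:indec} to write $X = X' \oplus \mathbb{O}$ so that each $X(k)$ is purely even or purely odd, with $p(X(k)) = (\dim X(k))\cdot p(k) \pmod 2$ in the rotated colouring—and show that $\sum_k p(X(k)) \equiv \sum_k (\dim X(k)) p(k)$, then check this matches the known transformation law of odd reflections $s_{\alpha_i}(\alpha_j) = \alpha_i + \alpha_j$ on parities. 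Concretely, $p(\alpha)$ in the rotated system equals $p(i_j)$ there, and one verifies $p(s_i(\beta)) = p(i) \cdot (\beta, \text{coroot}) + p(\beta)$ type identity; matching both sides of the induction then closes the argument.

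An alternative, possibly cleaner route for the parity statement is to bypass the inductive analysis of $\widetilde{S_i^{-}}$ and instead argue directly: for a positive root $\alpha$, after writing $\alpha = s_{i_1}\cdots s_{i_{j-1}}\alpha_{i_j}$, observe that $\alpha$ is a simple root of $\Pi' := s_{i_{j-1}}\cdots s_{i_1}\Pi$ and that, under the identification with $A_{n+m-1}$, $\bar\alpha = e_a - e_b$ for some $a,b$; then $p(\alpha) = 1$ iff exactly one of $a \le n < b$ or $b \le n < a$ holds, i.e. iff $\bar\alpha$ ``crosses'' the $\ep$–$\delta$ divide. On the representation side, $\underline{\dim}(X_\alpha) = \bar\alpha$ is the interval-type dimension vector of the $A_{n+m-1}$ indecomposable, supported on the segment from $a$ to $b$; from the explicit $\Z_2$-grading built into $X_\alpha$ by the functors $\widetilde{S_i^{-}}$, one reads off that $\sum_i p(X_\alpha(i))$ counts (mod 2) the number of colour-sign changes along that segment in $\Gamma_{col}$, which is exactly $1$ iff $a,b$ lie on opposite sides of the divide. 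Either way, the routine verifications are the compatibility $F\widetilde{S_i^{-}} \simeq S_i^{-} F$ and the parity count; the conceptual content is entirely in organizing the latter, so that is where I would concentrate the write-up.
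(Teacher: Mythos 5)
Your proposal is correct and follows essentially the same route as the paper: the indecomposability, dimension-vector, and uniqueness claims are reduced to Theorem~\ref{t:bgp} via the forgetful functor (using the compatibility $F\circ\widetilde{S_i^{-}}\simeq S_i^{-}\circ F$, which the paper leaves implicit), and the parity claim is traced back to the fact that $\alpha$ is simple for the rotated system $s_{i_{j-1}}\cdots s_{i_1}\Pi$ together with the definition of $X_\alpha$. Your inductive bookkeeping for the parity statement supplies detail that the paper's one-line assertion omits, but it is the same underlying argument, not a different approach.
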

\begin{proof}
The fact that $\alpha$ has the same parity as $X_{\alpha}$ follows from the fact that the root $\alpha$ is simple for the system $s_{i_{j-1}} \cdots s_{i_{2}} s_{i_{1}} \Pi$, and the definition of $X_{\alpha}$.
The remaining statements follow by applying the forgetful functor $\SRep (\Gamma, \Om) \to \Rep (\Gamma , \Om)$ and applying the results from \cite{bgp} (see Theorem~\ref{t:bgp}).
\end{proof}

\begin{example}
Consider the case of $A(2,2)$, with $\Pi = \{ \alpha_{1} = \epsilon_{1} - \epsilon_{2} , \alpha_{2} = \epsilon_{2} - \delta_{1} , \alpha_{3} = \delta_{1} - \delta_{2}  \}$. Let $\Om$ be the orientation $\bigcirc \longleftarrow \bigotimes \longleftarrow \bigcirc$. The corresponding adapted expression for $w_{0}$ is $w_{0} = s_{1}s_{2}s_{3}s_{1}s_{2}s_{1}$. Then the representations $X_{\alpha}$ are given as follows:

$X_{\epsilon_{1} - \epsilon_{2}} = \SSS_{1}^{0}= \stackrel{1|0}{\bigcirc} \longleftarrow \stackrel{0|0}{\bigotimes} \longleftarrow \stackrel{0|0}{\bigcirc} \\$

$X_{\epsilon_{1}-\delta_{1}} = \widetilde{S_{1}} (\stackrel{0|0} \bigcirc \longrightarrow \stackrel{0|1}{\bigotimes} \longleftarrow \stackrel{0|0}{\bigcirc}) = \stackrel{1|0}{\bigcirc} \longleftarrow \stackrel{0|1}{\bigotimes} \longleftarrow \stackrel{0|0}{\bigcirc} \\$

$X_{\epsilon_{1}-\delta_{2}} = \widetilde{S_{1}} \widetilde{S_{2}} (\stackrel{0|0}{\bigotimes} \longleftarrow \stackrel{0|0}{\bigotimes} \longrightarrow \stackrel{0|1}{\bigotimes}) = \stackrel{1|0}{\bigcirc} \longleftarrow \stackrel{0|1}{\bigotimes} \longleftarrow \stackrel{1|0}{\bigcirc} \\$

$X_{\epsilon_{2} -\delta_{1}} =\widetilde{S_{1}} \widetilde{S_{2}} \widetilde{S_{3}} (\stackrel{0|1}{\bigotimes} \longleftarrow \stackrel{0|0}{\bigcirc} \longleftarrow \stackrel{0|0}{\bigotimes}) = \stackrel{0|0}{\bigcirc} \longleftarrow \stackrel{0|1}{\bigotimes} \longleftarrow \stackrel{0|0}{\bigcirc} \\$

$X_{\epsilon_{2} - \delta_{2}} =  \widetilde{S_{1}} \widetilde{S_{2}} \widetilde{S_{3}} \widetilde{S_{1}} ( \stackrel{0|0}{\bigotimes} \longrightarrow \stackrel{0|1}{\bigotimes} \longleftarrow \stackrel{0|0}{\bigotimes}) = \stackrel{0|0}{\bigcirc} \longleftarrow \stackrel{0|1}{\bigotimes} \longleftarrow \stackrel{1|0}{\bigcirc} \\$

$X_{\delta_{1} - \delta_{2}} = \widetilde{S_{1}} \widetilde{S_{2}} \widetilde{S_{3}} \widetilde{S_{1}} \widetilde{S_{2}} ( \stackrel{1|0}{\bigcirc} \longleftarrow \stackrel{0|0}{\bigotimes} \longrightarrow \stackrel{0|0}{\bigcirc}) = \stackrel{0|0}{\bigcirc} \longleftarrow \stackrel{0|0}{\bigotimes} \longleftarrow \stackrel{1|0}{\bigcirc} \\
$

Here $n|m$ denotes the super-vector space with an $n$-dimensional vector space in degree 0, and an $m$-dimensional vector space in degree 1. All non-zero maps are given by parity change.
\end{example}

To obtain a construction giving all the roots, we extend this construction by setting $X_{-\alpha} = T X_{\alpha} \in \D^{b} (\SRep (\Gamma , \Om)) /T^{2}$, where $T$ denotes the translation functor.

\begin{cor}
Let $\K$ be the Grothendieck group of $\SRep (\Gamma_{col} , \Om)$. Then $\K$ is isomorphic to the root lattice of $A(n,m)$ and the $\Z_{2}$ grading on roots is given by the parity of $X_{\alpha}$ in Theorem~\ref{t:main}. In particular, the set of indecomposables, coloured by the parity of the object $X_{\alpha}$, gives the root system.
\end{cor}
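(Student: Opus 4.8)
The plan is to deduce the Corollary from Theorem~\ref{t:main}, the equivalence between $\SRep$ and $\Rep$, and Gabriel's Theorem. Let $F:\SRep(\Gamma_{col},\Om)\to\Rep(\Gamma,\Om)$ be the forgetful functor and $G$ its quasi-inverse. Since $F$ and $G$ are mutually inverse equivalences of abelian categories, they induce mutually inverse isomorphisms on Grothendieck groups, so $\K\cong\K(\Rep(\Gamma,\Om))$. By Theorem~\ref{t:gabriel}(2) the latter is isomorphic, via $[X]\mapsto\underline{\dim}(X)$, to $\Z^{Q_0}$, the root lattice of the $A_{n+m-1}$ root system attached to $\Gamma$; composing with the identification $\epsilon_i\mapsto e_i$, $\delta_j\mapsto e_{n+j}$ of Section~\ref{s:prelim} identifies $\K$ with the root lattice of $A(n,m)$. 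Under this isomorphism the class of an indecomposable goes to its dimension vector, and by Theorem~\ref{t:main} the objects $X_\alpha$ (for $\alpha$ a positive root of $A(n,m)$) exhaust the indecomposables up to isomorphism with $\underline{\dim}(X_\alpha)=\bar\alpha$; so the classes of indecomposables are precisely the positive roots, and with $\pm$ one gets all of $R$. For the negative roots one invokes the realization $X_{-\alpha}=TX_\alpha$ in $\D^b(\SRep(\Gamma_{col},\Om))/T^2$ recorded just above the Corollary.

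To recover the $\Z_2$-grading I would colour the indecomposable $X_\alpha$ by $p(X_\alpha)=\sum_i p(X_\alpha(i))\in\Z_2$, which makes sense because, by Proposition~\ref{p:indec}, each $X_\alpha(i)$ is purely even or purely odd. Theorem~\ref{t:main} states $p(X_\alpha)=p(\alpha)$, so this colouring reproduces the splitting $R=R_0\sqcup R_1$ on positive roots. For a negative root $-\alpha$ one extends $p$ to $\D^b(\SRep(\Gamma_{col},\Om))/T^2$ by $p(Y^\bullet)=\sum_k p(Y^k)$ and checks $p(TX_\alpha)=p(X_\alpha)$; together with $p(\alpha)=p(-\alpha)$ — valid since $R_0$ and $R_1=\{\pm(\epsilon_i-\delta_j)\}$ are each stable under negation — this gives the parity on all of $R$. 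Combined with the first paragraph, both assertions of the Corollary follow.

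The step that genuinely requires care — and the main obstacle — is the \emph{well-definedness} of the colouring. By Lemma~\ref{l:parity} one has $X\simeq P(X)$, while $X$ and $P(X)$ carry opposite values of $p(-)$; hence $p(-)$ is \emph{not} an invariant of the isomorphism class of an object of $\SRep(\Gamma_{col},\Om)$, and the colouring is really attached to the distinguished representatives $X_\alpha$ produced by the reflection-functor algorithm of Section~\ref{s:main}. One must therefore verify that $X_\alpha$, and hence $p(X_\alpha)$, depends only on $\alpha$ and $\Om$ and not on the chosen reduced word for $w_0$ adapted to $\Om$; this is exactly where the argument of Theorem~\ref{t:main} does the work, since $\alpha$ is simple for the rotated simple system $s_{i_{j-1}}\cdots s_{i_1}\Pi$, whose coloured Dynkin diagram is obtained from $\Gamma_{col}$ by the corresponding sequence of (even and odd) reflections, and the parity of a simple root is intrinsic to a simple system. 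A related point is that the parity-change functor $P$ is compatible with the functors $\widetilde{S_i^\pm}$ of Definition~\ref{d:superrefl} — which follows from $\widetilde{S_i^\pm}=P^{p(i)}\circ S_i^\pm$ and the exactness of $P$ — so that the two possible descriptions of an indecomposable differ only by $P$ and the algorithm never produces a genuinely third object with a third parity.
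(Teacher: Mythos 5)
Your argument is correct and takes essentially the same route as the paper, which states this corollary without proof as an immediate consequence of the equivalence between $\SRep(\Gamma_{col},\Om)$ and $\Rep(\Gamma,\Om)$, Gabriel's Theorem, the identification $A(n,m)\to A_{n+m-1}$, and Theorem~\ref{t:main}. Your closing discussion of why the colouring by $p(X_\alpha)$ is well defined despite $X\simeq P(X)$ addresses a genuine subtlety that the paper only flags in a remark, and is a strengthening rather than a deviation.
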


\begin{cor}
The bijection $R \to \Gammahat$ given by $\alpha \mapsto [X_{\alpha}]$ gives a combinatorial realisation of the root system $A(n,m)$ in terms of the Auslander-Reiten quiver $\Gammahat$, where we colour vertices of $\Gammahat$ according to the parity of $X_{\alpha}$. 
\end{cor}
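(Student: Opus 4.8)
The plan is to deduce this corollary by combining Theorem~\ref{t:main} with the equivalence of categories $G\colon\Rep(\Gamma,\Om)\to\SRep(\Gamma_{col},\Om)$ (whose inverse is the forgetful functor $F$) and the combinatorial description of the $A_{n+m-1}$ root system recalled in Theorem~\ref{t:kt1}. The first task is to set up the dictionary between the two sides. The orientation $\Om$ of the underlying diagram $\Gamma$ (of type $A_{n+m-1}$) comes from a height function, which by Theorem~\ref{t:kt1}(2) determines a Coxeter element $C\in W$ together with a compatible simple system; under the grading-forgetting identification of Section~\ref{s:prelim} this simple system is $\bar\Pi$, and the reduced expression $w_0=s_{i_1}\cdots s_{i_l}$ adapted to $\Om$ used to define the $X_\alpha$ is exactly the one Theorem~\ref{t:kt1}(3) attaches to this height function. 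With these choices, Theorem~\ref{t:kt1} supplies a canonical bijection $\Phi\colon A_{n+m-1}\to\Gammahat$ sending $C$ to the twist $\tau$ and sending each root to the vertex of $\Gammahat$ at which the corresponding (possibly shifted) indecomposable of $\D^b(\Gamma,\Om)/T^2$ sits.

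Next I would transport everything to the super-category. Since $G$ is an equivalence it induces equivalences of bounded derived categories and of their $2$-periodic orbit categories, so $\D^b(\SRep(\Gamma_{col},\Om))/T^2\simeq\D^b(\Gamma,\Om)/T^2$, whose Auslander-Reiten quiver is $\Gammahat$. By Theorem~\ref{t:main}, for a positive root $\alpha$ of $\Pi$ the object $X_\alpha$ is the unique indecomposable super-representation of dimension vector $\alpha$; applying $F$ turns it into the indecomposable representation of dimension $\bar\alpha$ (Theorem~\ref{t:bgp}), which by the previous paragraph occupies the vertex $\Phi(\bar\alpha)$ of $\Gammahat$. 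The prescription $X_{-\alpha}=T X_\alpha$ matches the description in Theorem~\ref{t:kt1}(3) of the negative roots as shifts of the positive part, so $[X_\alpha]$, read as a vertex of $\Gammahat$ (equivalently an isomorphism class in $\D^b(\SRep(\Gamma_{col},\Om))/T^2$), runs bijectively over $\Gammahat$ as $\alpha$ runs over $R$; concretely $\alpha\mapsto[X_\alpha]$ is the composite of $\alpha\mapsto\bar\alpha$ with $\Phi$.

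It then remains to show that colouring the vertex $[X_\alpha]$ of $\Gammahat$ by the parity $p(X_\alpha)$ is well defined and reproduces the $\Z_2$-grading $R=R_0\sqcup R_1$. The one delicate point is well-definedness: by Lemma~\ref{l:parity} one has $X\simeq P(X)$ in $\SRep(\Gamma_{col},\Om)$, so $\sum_i p(X(i))$ is not an invariant of an abstract isomorphism class. But $X_\alpha$ is produced by a fixed recipe from $(\Gamma_{col},\Om)$, and Theorem~\ref{t:main} identifies $p(X_\alpha)$ with $p(\alpha)$, which is intrinsic to the root once the simple system $\Pi$ compatible with $C$ has been fixed. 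Hence the colouring descends to $\Gammahat$, and along the bijection $\alpha\mapsto[X_\alpha]$ a vertex is coloured $0$ (respectively $1$) exactly when the associated root of $A(n,m)$ is even (respectively odd). Together with the identifications imported from Theorem~\ref{t:kt1} — $C\leftrightarrow\tau$, simple systems $\leftrightarrow$ height functions, explicit positive and negative subquivers of $\Gammahat$ — this is the asserted realisation of $A(n,m)$ by the coloured quiver $\Gammahat$.

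The only step with real content beyond assembling citations is the compatibility in the first paragraph: I must check that the adapted reduced expression entering the construction of $X_\alpha$ is the one governing $\Phi$, so that $[X_\alpha]$ lands at $\Phi(\bar\alpha)$ and not merely somewhere in its $\tau$-orbit, and that the resulting parity labelling of $\Gammahat$ is consistent along the arrows of the Auslander-Reiten quiver. I expect both to follow directly from parts (1)--(3) of Theorem~\ref{t:kt1} applied to $\Gamma$, with the parity bookkeeping simply carried along via Theorem~\ref{t:main}; so the hard part here is threading these identifications together carefully rather than proving anything genuinely new.
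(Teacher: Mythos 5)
Your proposal is correct and follows essentially the route the paper intends: the corollary is stated without proof as an immediate assembly of Theorem~\ref{t:main}, the prescription $X_{-\alpha}=TX_{\alpha}$ in the $2$-periodic derived category, and the combinatorial bijection of Theorem~\ref{t:kt1} transported through the equivalence $F,G$. Your explicit observation that the parity colouring is well defined only because $X_{\alpha}$ is produced by a fixed recipe (and not as an abstract isomorphism class, since $X\simeq P(X)$ by Lemma~\ref{l:parity}) is a point the paper relegates to a remark, and it is worth making explicit exactly as you do.
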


\begin{example}
Again consider the case of $A(2,2)$. The bijection $R \to \Gammahat$ is given below.
\end{example}

$$
\xy
(0,0)*{\bigcirc}="A"; (10,10)*{\bigotimes}="B"; (0,20)*{\bigotimes}="C"; (10, 30)*{\bigotimes}="D"; (0,40)*{\bigcirc}="E"; (10,50)*{\bigotimes}="F"; (0,60)*{\bigotimes}="G"; (10,70)*{\bigotimes}="H"; (5,75)*{}="I"; (5,-5)*{}="P"; 
(20,0)*{\bigcirc}="J"; (20,20)*{\bigotimes}="K"; (20,40)*{\bigcirc}="L"; (20,60)*{\bigotimes}="M"; (15,75)*{}="N"; (15,-5)*{}="Q";
(-8,0)*{\epsilon_{1}-\epsilon_{2}}; (-8, 20)*{\epsilon_{2} - \delta_{1}}; (-8, 40)*{\delta_{1} -\delta_{2}}; (-8, 60)*{\delta_{2} - \epsilon_{1}};
(18, 10)*{\epsilon_{1} - \delta_{1}}; (18, 30)*{\epsilon_{2} - \delta_{2}}; (18, 50)*{\delta_{1} - \epsilon_{1}}; (18, 70)*{\delta_{2} - \epsilon_{2}}; 
(28, 0)*{\delta_{2} - \delta_{1}}; (28, 20)*{\epsilon_{1} - \delta_{2}}; (28, 40)*{\epsilon_{2} - \epsilon_{1}}; (28, 60)*{\delta_{1} - \epsilon_{2}};

\ar "B"; "A" ;
\ar "C"; "B";
\ar "D"; "C";
\ar "E"; "D";
\ar "F"; "E";
\ar "G"; "F";
\ar "H"; "G";
\ar "I"; "H";
\ar "A"; "P";

\ar "B"; "J";
\ar "K"; "B";
\ar "D"; "K";
\ar "D" ; "L"
\ar "F"; "L";
\ar "M"; "F";
\ar "H"; "M";
\ar "N"; "H";
\ar "J"; "Q";

\endxy
$$

Recall that $\Gammahat$ is periodic, so the arrows leaving the top row are the same as the incoming arrows of the bottom row.

%%%%%%%%%%%%%%%%%%%%%%%%%%%%%%%%%%%%%%%%%%%%%%%%%%%%%%%%%%%
\section{Graded Path Algebra of a Coloured Quiver}

In this section we show that the path algebra of a $\Z_{2}$-coloured quiver $\overrightarrow{Q}$ has a natural $\Z_{2}$ grading, and that the category of $\Z_{2}$-graded modules over this algebra is equivalent to the category of super-representations of $\overrightarrow{Q}$. This gives a generalization of the non-graded case, where modules over the path algebra are equivalent to quiver representations. Moreover, using this construction, we can define a $\Z_{2}$-graded preprojective algebra. The study of this preprojective algebra and its relation to the representation theory of the corresponding Lie superalgebra is the subject of ongoing research.

We begin by recalling the definition of the path algebra of a quiver $\overrightarrow{Q}$.

For any quiver $\overrightarrow{Q}$ let $\PQ$ be the following algebra. As an algebra it is generated by elements $\{ e \}_{e \in Q_{1}} \cup \{ v_{i} \}_{i\in Q_{0}}$. Here the elements $v_{i}$ are thought of as ``paths of length 0 from $i$ to $i$". Viewing a path as a sequence of edges, the multiplication of basis elements is given by concatenation of paths. 

\begin{defi} The algebra $\PQ$ defined above is called the {\em path algebra} of $\overrightarrow{Q}$. It is an associative algebra with unit given by $1 = \sum_{i \in Q_{0}} v_{i}$.
\end{defi} 

The algebra $\PQ$ is graded by path length and by the source and target of the path. This gives a decomposition 
\begin{equation}\label{e:decomp}
\PQ = \bigoplus_{i,j \in Q_{0} ; k \in \N} P_{i,j;k}
\end{equation} 
where $P_{i,j;k}$ is the space spanned by paths of length $k$ from $i$ to $j$. (Here an edge has length 1, and the idempotent corresponding to a vertex has length 0.)

We now define a $\Z_{2}$ grading on $\PQ$ in the case that $\overrightarrow{Q}$ is a coloured quiver. 
\begin{itemize}
\item For each vertex $i \in Q_{0}$ set $p(v_{i}) = 0$. 
\item For each edge $e\in Q_{1}$ set $p(e) = p(s(e)) + p(t(e)) \in \Z_{2}$. 
\item Viewing a path as a sequence of edges, we extend this grading to all paths, by setting $p(e_{i_{n}} e_{i_{n-1}} \cdots e_{i_{1}}) = \sum p(e_{i_{k}})$. 
\item We set $\PQ_{i} = \{ x \in \PQ | p(x) = i \}$. 
\end{itemize}
Since multiplication of paths is given by concatenation, and $p(v_{i}) = 0$, we see that this grading is well-defined and $m: \PQ_{i} \otimes \PQ_{j} \to \PQ_{i+j}$.

\begin{defi} A super-module $M$ over $\PQ$ is a $\Z_{2}$-graded vector space $M=M_{0} \oplus M_{1}$, such that for any edge $e$, the action $e:M \to M$ has degree $p(e)$.\\
A morphism of modules $M \to N$, is a linear map $\Phi : M \to N$, which is compatible with the action of $\PQ$. (i.e. For any path $p \in \PQ$, $\Phi (p.x) = p. \Phi(x)$.) \\
Denote the abelian category of super-modules over $\PQ$ by $SMod (\PQ)$.
\end{defi}

In the case of uncoloured quivers and non-graded modules, there is an equivalence between the category of modules over the path algebra $\PQ$ and the category of representations of $\overrightarrow{Q}$. This equivalence extends to this setting.\\

Define a functor $F: SRep (\overrightarrow{Q}) \to SMod (\PQ)$  as usual, by setting $F(X) = \oplus_{i \in Q_{0}} X(i)$ with the action of the edge $e$ given by the map $x_{e}$. Note that the degree of the map $x_{e}$ is the same as the parity of $e \in \PQ$, so this is well-defined. \\

Define a functor $G: SMod (\PQ) \to SRep (\overrightarrow{Q})$ as usual, by setting  $G(M) = X_{M}$ where $X_{M} (i) = v_{i} M$ and $x_{e}$ is given by the action of $e : v_{s(e)}M \to v_{t(e)}M$. Again, since $p(e) = p(s(e)) + p(t(e))$ we see that  $x_{e}$ has the correct degree.\\

\begin{prop}\label{p:equiv2}
The functors $F,G$ are inverse and provide an equivalence of categories between $SRep (\overrightarrow{Q})$ and $SMod (\PQ)$.
\end{prop}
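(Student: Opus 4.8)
The plan is to verify that $F$ and $G$ are mutually inverse functors, since the standard (non-graded) equivalence between quiver representations and path algebra modules already handles all the underlying bookkeeping; the only new ingredient here is that everything respects the $\Z_{2}$-grading, and this follows from the identity $p(e) = p(s(e)) + p(t(e))$ which is built into both the definition of a super-representation and the $\Z_{2}$-grading on $\PQ$.

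First I would check that $F$ and $G$ are well-defined on objects. For $F$, given $X \in \SRep(\overrightarrow{Q})$, the space $F(X) = \bigoplus_{i \in Q_{0}} X(i)$ is $\Z_{2}$-graded with $F(X)_{n} = \bigoplus_{i} X_{n}(i)$, and for an edge $e$ the operator $x_{e}$ has degree $\Deg(x_{e}) = p(s(e)) + p(t(e)) = p(e)$, so the $\PQ$-action has the required degree; associativity of the action is immediate from the fact that composition of the $x_{e}$ realizes concatenation of paths and $v_{i}$ acts as projection onto $X(i)$. For $G$, given a super-module $M$, the idempotents $v_{i}$ are even, so each $X_{M}(i) = v_{i}M$ inherits a $\Z_{2}$-grading from $M$, and since $\sum_{i} v_{i} = 1$ we have $M = \bigoplus_{i} v_{i}M$; the action of $e$ sends $v_{s(e)}M$ to $v_{t(e)}M$ because $e = v_{t(e)} e\, v_{s(e)}$ in $\PQ$, and it has degree $p(e) = p(s(e)) + p(t(e))$, which is exactly the degree condition for a super-representation.

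Next I would check compatibility with morphisms in both directions: a morphism $\Phi$ of super-representations is a collection of $\phi_{i}$ commuting with all $x_{e}$, which assembles into a degree-preserving linear map $\bigoplus_{i} X(i) \to \bigoplus_{i} Y(i)$ intertwining the $\PQ$-action (it suffices to check compatibility with the generators $v_{i}$ and $e$); conversely a $\PQ$-module morphism $\Psi : M \to N$ restricts to maps $v_{i}M \to v_{i}N$ because $\Psi$ commutes with the $v_{i}$, and these restrictions commute with the edge-actions, giving a morphism of super-representations. Finally I would verify $G \circ F = \id$ and $F \circ G = \id$: for the first, $v_{i} F(X) = v_{i}\bigl(\bigoplus_{j} X(j)\bigr) = X(i)$ on the nose and the recovered edge maps are the original $x_{e}$; for the second, $\bigoplus_{i} v_{i}M = M$ and the reassembled action agrees with the original one on the generators $e$ and $v_{i}$, hence everywhere.

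I do not expect any genuine obstacle here: the content is entirely formal, and the single point requiring attention is the degree bookkeeping, namely that $\Deg(x_{e}) = p(e)$ is the common link between Definitions of super-representations and of super-modules. Once that is observed, the proof is a routine transcription of the classical equivalence $\Rep(\overrightarrow{Q}) \simeq \mathrm{Mod}(\PQ)$ with grades carried along, so I would state it concisely rather than belaboring the diagram chases.
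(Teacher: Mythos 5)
Your proposal is correct and follows exactly the route the paper takes: the paper's own proof is the one-line remark that the statement ``follows easily from the definitions, and is exactly the same as the classical case,'' and your write-up is simply that argument carried out explicitly, with the degree bookkeeping $\Deg(x_{e}) = p(e) = p(s(e)) + p(t(e))$ correctly identified as the only point where the graded setting adds anything to the classical equivalence $\Rep(\overrightarrow{Q}) \simeq \mathrm{Mod}(\PQ)$. The only minor caution is on morphisms: the paper allows the $\phi_{i}$ to be homogeneous of either degree (not just degree-preserving), so strictly speaking the corresponding module maps should be taken homogeneous rather than degree $0$, but this does not affect the substance of the argument.
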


\begin{proof}
This follows easily from the definitions, and is exactly the same as the classical case. (See \cite{crawley-boevey}.)
\end{proof}

We now consider the preprojective algebra of a coloured graph $\Gamma_{col}$.

The preprojective algebra of a quiver $\overrightarrow{Q}$ is defined as follows:
Consider the double quiver $\overline{Q}$ which has the same vertex set as $\overrightarrow{Q}$ but for every arrow $e:i\to j$ there is an arrow $\overline{e} :j \to i$.
Choose a function $\epsilon : \overline{Q}_{1} \to \{ \pm 1 \}$ so that $\epsilon (e) + \epsilon ( \overline{e} ) = 0$. For each vertex $i\in \overrightarrow{Q}$ define $\theta_{i} \in P_{i,i;2}$ by
\begin{equation}\label{e:mesh}
\theta_{i} =\sum_{s(e)=i}  \epsilon (e)  \overline{e} e \in P_{i,i;2}
\end{equation}

\begin{defi} The {\em preprojective algebra}  $\Pi (\Gamma)$ of a graph $\Gamma$ is defined as $P(\overline{\Gamma}) / J$ where $J$ is the ideal generated by the $\theta_{i}$'s. The ideal $J$ is called the ``mesh" ideal.
\end{defi}
Note that this algebra is independent of the choice of $\epsilon$ and depends only on the underlying graph $\Gamma$, not on the orientation $\Om$. (See \cite{lusztig2} for details.)

If we consider the case of a $\Z_{2}$-coloured graph $\Gamma_{col}$, then from the construction above we see that $P(\overline{\Gamma_{col}})$ has a natural $\Z_{2}$-grading, and the mesh relation (Equation~\ref{e:mesh}) is homogeneous of degree 0. To see this, note that $\theta_{i}$ is a sum of paths that start and end at $i$, and hence has degree 0 in $P(\overline{\Gamma})$. Hence $J$ is a homogeneous ideal of degree 0.

This implies that $\Pi (\Gamma_{col}) =P(\overline{\Gamma_{col}}) / J $ inherits a $\Z_{2}$ grading from $P(\overline{\Gamma})$.

\begin{remark} Note that our constructions do not seem to easily extend to non-A-type Lie superalgebras. This is related to the issue that for non-A-type Lie superalgebras, not only the colouring of the Dynkin graph depends on the choice of simple roots, but the underlying graph does as well. (For example, the D series Lie superalgebras can have both simply-laced and non-simply-laced diagrams, depending on the simple roots chosen to construct the Dynkin graph.)
\end{remark}

%%%%%%%%%%%%%%%%%%%%%%%%%%%%%%%%%%%%%%%%%%%%%%%%%%%%%%%%%%%
\bibliographystyle{amsalpha}

%%%%%%%%%%%%%%%%%%%%%%%%%%%%%%%%%%%%%%%%%%%%%%%%%%%%%%%%%%
\end{document}